\documentclass[reqno]{amsart}
\usepackage{amssymb}
\usepackage{amsthm}
\usepackage{mathtools}
\mathtoolsset{showonlyrefs, showmanualtags}

\usepackage{aligned-overset}

\usepackage{comment}

\usepackage{mleftright}
\usepackage{enumitem}
\setlist[enumerate]{
	leftmargin=*,
	align=left
}

\numberwithin{equation}{section}

\theoremstyle{plain}
\newtheorem{theorem}{Theorem}[section]
\newtheorem{corollary}[theorem]{Corollary}
\newtheorem{proposition}[theorem]{Proposition}
\newtheorem{conjecture}{Conjecture}

\newcommand{\R}{\mathbb{R}}
\newcommand{\N}{\mathbb{N}}
\newcommand{\C}{\mathbb{C}}
\renewcommand{\S}{\mathbb{S}}

\DeclareMathOperator{\Real}{Re}
\renewcommand{\Re}{\Real}

\DeclareMathOperator{\dist}{dist}
\NewDocumentCommand{\distfX}{O{f} O{X}}{\dist(#1, #2)}

\DeclareMathOperator{\Span}{span}

\newcommand{\Laplacian}{\Delta}

\newcommand{\factorial}[1]{#1!}

\newcommand{\abs}[1]{\lvert#1\rvert} 
\newcommand{\Abs}[1]{\mleft \lvert #1 \mright \rvert}
\newcommand{\norm}[1]{\lVert#1\rVert} 
 
\newcommand{\innerproduct}[2]{\langle #1, #2 \rangle}
\newcommand{\set}[2]{\{ \, #1 : #2 \, \} } 
\newcommand{\Set}[2]{ \mleft \{ \, #1 : #2 \, \mright\} }

\newcommand{\lambdak}[1][k]{\lambda_{#1}}

\newcommand{\intervaloo}[2]{(#1, #2)}
\newcommand{\intervalco}[2]{[#1, #2)}

\newcommand{\intervalcc}[2]{[#1, #2]}

\newcommand{\SO}{\textup{SO}}
\newcommand{\Rotate}{R}
\newcommand{\RotateV}[1][\planeV]{\Rotate_{#1}}
\newcommand{\RotateVperp}[1][\planeVperp]{\RotateV[\planeVperp]}
\newcommand{\Rotatemean}{\mathcal{R}}

\NewDocumentCommand{\Vst}{O{\sigma} O{\tau}}{\planeV_{#1, #2}}
\NewDocumentCommand{\ProjVst}{O{\sigma} O{\tau}}{\Rotatemean_{#1, #2}}
\NewDocumentCommand{\omegast}{O{\sigma} O{\tau}}{\omega_{#1, #2}}
\NewDocumentCommand{\Rotatest}{O{\sigma} O{\tau}}{\Rotate_{#1, #2}}

\newcommand{\rad}{\textup{rad}}
\newcommand{\even}{\textup{even}}

\newcommand{\Lrad}{L^2_{\rad}}
\newcommand{\Leven}{L^2_{\even}}
\newcommand{\Lk}[1][k]{L^2_{#1}}
\newcommand{\LV}[1][V]{L^2_{#1}}

\newcommand{\Hk}[1][k]{\mathcal{H}_{#1}}
\newcommand{\Hkeven}[1][k]{\mathcal{H}_{#1, \even}}

\newcommand{\Gaussians}{G}

\newcommand{\planeV}{V}
\newcommand{\planeVperp}{\planeV^\perp}
\newcommand{\ProjV}[1][\planeV]{\Rotatemean_{#1}}
\newcommand{\ProjVperp}{\ProjV[\planeVperp]}

\newcommand{\Projrad}{\Pi_{\rad}}
\newcommand{\Projeven}{P_{\even}}

\newcommand{\ProjHk}[1][k]{\Pi_{#1}}

\newcommand{\Projmean}{\mathcal{T}}

\newcommand{\LegendreP}[1][k]{\JacobiP[#1][0][0]}
\NewDocumentCommand{\GegenbauerC}{O{k} O{\mu}}{C_{#1}^{(#2)}}
\NewDocumentCommand{\JacobiP}{O{k} O{\alpha} O{\beta}}{P_{#1}^{(#2, #3)}}

\newcommand{\Zk}[1][k]{Z_{#1}}
\newcommand{\Wk}[1][k]{W_{#1}}
\newcommand{\Zktilde}[1][k]{\widetilde{Z}_{#1}}

\NewDocumentCommand{\Ykmn}{O{k} O{m} O{n}}{Y_{#1}^{(#2, #3)}}
\NewDocumentCommand{\Ykmnab}{O{k} O{m} O{n} O{a} O{b}}{Y_{#1; #4, #5}^{(#2, #3)}}
\NewDocumentCommand{\Ykmnabtilde}{O{4k} O{2m} O{2n} O{0} O{0}}{\widetilde{Y}_{#1; #4, #5}^{(#2, #3)}}
\NewDocumentCommand{\normalizeN}{O{k} O{m} O{n}}{N_{#1}^{(#2, #3)}}

\NewDocumentCommand{\Yjk}{O{j} O{k}}{Y_{#2}^{(#1)}}

\newcommand{\unitvec}{\mathbf{e}}
\newcommand{\unitvecj}[1][j]{\unitvec_{#1}}
\newcommand{\basisE}{E}
\newcommand{\orthogonalE}{E}
\NewDocumentCommand{\orthogonalEst}{O{\sigma} O{\tau}}{\orthogonalE_{#1, #2}}

\usepackage[numbers,sort]{natbib}

\usepackage{hyperref}
\hypersetup{
	setpagesize=false,
	bookmarksnumbered=true,
	bookmarksopen=true,
	colorlinks=true,
	linkcolor=blue,
	citecolor=blue,
}

\usepackage[T1]{fontenc}
\usepackage{lmodern}

\title[A sharpened Strichartz estimate]{A sharpened $L^8_t L^4_x$-Strichartz estimate for even initial data}
\date{}

\author{Soichiro Suzuki}
\address[Soichiro Suzuki]{Department of Mathematics, Chuo University, 1-13-27 Kasuga, Bunkyo-ku, Tokyo, 112-8551, Japan}
\email{soichiro.suzuki.m18020a@gmail.com}

\subjclass[2020]{Primary 42B37; Secondary 35Q41, 33C55, 33C45}

\keywords{Strichartz estimates, optimal constants, spherical harmonics.}

\begin{document}
\begin{abstract}
It is well known that $u_0 \in L^2(\R) \setminus \{0\}$ is an extremizer for the $L^8_t L^4_x$-Strichartz estimate if and only if it is a Gaussian. 
We show that every extremizing sequence of even functions $\{ u_n \} \subset L^2(\R) \setminus \{0\}$ approaches a centered Gaussian in an appropriate sense.  
This result follows from a certain inequality for coordinate-wise even $L^2$ functions on $\R^4$ conjectured by \citeauthor{Gon2019} (\citeyear{Gon2019}) which we prove in this paper with the optimal constant.
\end{abstract}
\maketitle

\section{Introduction} \label{section:introduction}
The Strichartz estimate states that there exists a constant $C > 0$ such that
\begin{equation} \label{eq:Strichartz} \noeqref{eq:Strichartz}
\norm{ \norm{ e^{i t \Laplacian} u_0(x) }_{L^p(\R^d_x)} }_{L^q(\R_t)} \leq C \norm{u_0}_{L^2(\R^d)}
\end{equation}
holds for every $u_0 \in L^2(\R^d)$, where $(p, q, d)$ are such that 
\begin{equation} \label{eq:admissible} \noeqref{eq:admissible}
p, q \in \intervalcc{2}{\infty}, \quad (p, q, d) \ne (\infty, 2, 2), \quad \frac{d}{p} + \frac{2}{q} = \frac{d}{2} .
\end{equation}
This was originally proved by \citet[Corollary 1]{Str1977} for the diagonal case $p = q = 2(d+2)/d$. For the mixed-norm estimates, see \citet[Lemma 3.1 and its proof]{GV1985}, \citet[Lemma 2.1]{Yaj1987}, and \citet[Corollary 1.4]{KT1998}. 
In particular, we have
\begin{align}
\norm{ e^{i t \Laplacian} u_0(x) }_{L^4(\R^2_x \times \R_t)}^4 &\leq \frac{1}{4} \norm{u_0}_{L^2(\R^2)}^4 ,
\label{eq:442}\\
\norm{ e^{i t \Laplacian} u_0(x) }_{L^6(\R_x \times \R_t)}^6 &\leq \frac{1}{2 \sqrt{3}} \norm{u_0}_{L^2(\R)}^6 ,
\label{eq:661}\\
\norm{ \norm{ e^{i t \Laplacian} u_0(x) }_{L^4(\R_x)} }_{L^8(\R_t)}^8 &\leq \frac{1}{4} \norm{u_0}_{L^2(\R)}^8
\label{eq:481}
\end{align}
when $(p, q, d) = (4, 4, 2), (6, 6, 1), (4, 8, 1)$. The constants $1/4$ and $1/(2 \sqrt{3})$ are optimal here, and $u_0 \in L^2(\R^d) \setminus \{0\}$ is an extremizer of these inequalities if and only if
\begin{equation} \label{eq:Gaussian}
u_0(x) = C \exp( A\abs{x}^2 + b \cdot x)
\end{equation}
with $A, C \in \C \setminus \{0\}$, $b \in \C^d$, and $\Re A < 0$. See \citet[Theorems 1.1 and 1.4]{Fos2007}, \citet[Theorems 1.3 and 1.5]{HZ2006}, \citet[Corollary 2]{Car2009}, \citet[Theorem 1.1]{BBCH2009}, and the references therein. 
Now let $\Lrad(\R^d)$ and $\Leven(\R^d)$ be the subspaces of $L^2(\R^d)$ consisting of all radially symmetric functions and of all \emph{coordinate-wise} even functions. In addition, we write
\begin{equation} \label{eq:dist}
\distfX \coloneqq \inf_{ g \in X } \norm{ f - g }_{L^2(\R^d)} 
\end{equation}
for each $f \in L^2(\R^d)$ and $X \subset L^2(\R^d)$.
\citet{Gon2019} proved the following result, which improves the inequality \eqref{eq:442} for $u_0 \in \Lrad(\R^2)$.
\begin{theorem}[{\cite[Theorem 3]{Gon2019}}] \label{thm:442}
Let $f \in L^2(\R^4)$ be such that
\begin{equation}
f(x) = g( \sqrt{x_1^2 + x_2^2}, \sqrt{x_3^2 + x_4^2} )
\end{equation}
for some $g \colon \intervalco{0}{\infty}^2 \to \C$, and let $U \colon \R_x^2 \times \R_t \to \C$ be
\begin{equation}
U(x_1, x_2, t) \coloneqq (e^{i t \Laplacian} f) (x_1, x_2, x_1, x_2).
\end{equation}
Then we have
\begin{equation} \label{eq:442 improved}
	\norm{U}_{L^2(\R^2_x \times \R_t)}^2
	\leq \frac{1}{4} \mleft( \norm{f}_{L^2(\R^4)}^2
	- \gamma \distfX[f][\Lrad(\R^4)]^2 \mright)
\end{equation}
for some constant $\gamma > 0$ independent of $f$.
\end{theorem}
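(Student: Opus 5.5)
The starting point is the observation (going back to Foschi) that $U$ is the restriction to the diagonal of a free Schrödinger evolution in $\R^4$. Writing $\widehat f(\xi,\eta)$ with $\xi,\eta\in\R^2$ and taking the space–time Fourier transform of $U$, one finds that $\widehat U(\zeta,\tau)$ is an integral of $\widehat f$ over the set
\begin{equation}
\Set{(\xi,\eta)}{\xi+\eta=\zeta,\ \abs{\xi}^2+\abs{\eta}^2=\tau}.
\end{equation}
In the variables $w=(\xi+\eta)/\sqrt2$ and $z=(\xi-\eta)/\sqrt2$, this set is a circle $\abs{z}=\mathrm{const}$ with $w$ fixed.

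A direct computation with Plancherel should then give
\begin{equation}
\norm{U}_{L^2(\R^2_x\times\R_t)}^2=\frac14\norm{\Projmean \widehat f}_{L^2(\R^4)}^2 .
\end{equation}
Here $\Projmean$ averages a function of $(w,z)$ over rotations $z\mapsto \Rotate z$, $\Rotate\in\SO(2)$, with $w$ fixed. Since $\Projmean$ is an orthogonal projection, this yields
\begin{equation}
\norm{U}^2=\frac14\bigl(\norm{f}^2-\norm{(I-\Projmean)\widehat f}^2\bigr),
\end{equation}
which recovers \eqref{eq:442} with constant $1/4$. So the theorem reduces to the estimate
\begin{equation}
\norm{(I-\Projmean)\widehat f}\ge c\,\dist(\widehat f,\Lrad(\R^4))
\end{equation}
for $\widehat f$ in the bi-radial subspace. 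The Fourier transform preserves both bi-radiality and radiality, so it suffices to work on the Fourier side.

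Next I would reduce to the sphere. Write $\widehat f(r\omega)$ with $\omega\in\S^3$. Both the bi-radial projection $\Projrad'$ (averaging over $\SO(2)\times\SO(2)$ acting on $(\xi,\eta)$) and $\Projmean$ commute with dilations and act only on the angular variable. Expanding in spherical harmonics $\bigoplus_k\Hk$ reduces everything to finite-dimensional subspaces, and $\Lrad$ corresponds to $k=0$. On each $\Hk$ let $A_k$ be the bi-radial invariant subspace and $B_k$ the $\Projmean$-invariant subspace. I would check that $A_k\cap B_k=\{0\}$ for $k\ge1$: the two rotation groups together generate $\SO(4)$, and $\SO(4)$-invariant harmonics exist only for $k=0$. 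It then remains to prove the uniform bound
\begin{equation}
\sup_{k\ge1}\norm{\Projmean|_{A_k}}<1 ,
\end{equation}
that is, that the cosine of the angle between $A_k$ and $B_k$ stays bounded away from $1$. Given this, the estimate for $(I-\Projmean)\widehat f$ follows with $\gamma=1-\sup_k\norm{\Projmean|_{A_k}}^2$ by orthogonality across $k$.

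The main obstacle is this uniformity in $k$. For it I would write an explicit basis. Bi-radial harmonics are nonzero only in even degrees $2k$, and in Hopf-type coordinates $\omega=(\cos\theta\,\omega_1,\sin\theta\,\omega_2)$ they are $Z_k=\LegendreP(\cos2\theta)$, up to normalisation. Then $\Projmean Z_k$ is computed by writing $\xi,\eta$ in terms of $(w,z)$, averaging over the rotation of $z$, and projecting back to $A_k$; this step uses Gegenbauer/Jacobi addition formulas. Since $A_{2k}$ is one-dimensional, I expect $\innerproduct{\Projmean Z_k}{Z_k}/\norm{Z_k}^2$ to be an explicit quantity, such as a squared Legendre value or an integral of $\LegendreP$. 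I would then bound it by $1-\delta$ for all $k\ge1$. This should follow from its explicit form: it is $<1$ for each $k$ by the triviality of the intersection, and it should decay as $k\to\infty$, for instance via classical bounds like $\abs{\LegendreP(\cos\phi)}\lesssim (k\sin\phi)^{-1/2}$.
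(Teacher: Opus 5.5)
The paper does not prove Theorem~\ref{thm:442} itself: it is quoted from \cite{Gon2019}, and the sharp value $\gamma = 3/4$ is attributed to the argument of \cite{GZ2022}. Your plan is nevertheless exactly what the paper's machinery supports. Your averaging operator is $\ProjV[1, 1]$ (not the paper's $\Projmean$, which is a mean of four such projections), and your Plancherel identity is Proposition~\ref{prop:quadoratic form}. The bi-radial functions form the range of $\ProjV[W]\ProjV[W^\perp]$ with $W = \Span\{\mathbf{e}_1, \mathbf{e}_2\}$. By Proposition~\ref{prop:koornwinder p=q=2}, this operator vanishes on odd degrees, and on $\Hk[2k]$ it is the rank-one projection onto $\Phi_k(\omega) \coloneqq \LegendreP(\omega_1^2+\omega_2^2-\omega_3^2-\omega_4^2)$, which is your $Z_k$.

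One step is wrong as stated: the torus and the circle $\{R_{V_{1,1}}(\theta)\}$ do \emph{not} generate $\SO(4)$. Both commute with the complex structure $J(x_1,x_2,x_3,x_4) = (-x_2,x_1,-x_4,x_3)$, since the planes $V_{1,1}$ and $V_{-1,-1}$ are $J$-invariant. Hence they generate only a subgroup of $\mathrm{U}(2)$. The conclusion $A_k \cap B_k = \{0\}$ for $k \geq 1$ survives for a different reason. In the coordinates $(z_1,z_2) = (x_1+ix_2, x_3+ix_4)$, a torus-invariant function on $\S^3$ depends only on $s \coloneqq \abs{z_1}^2 - \abs{z_2}^2$. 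Meanwhile $R_{V_{1,1}}(\theta)$ is the identity on $\C(1,1)$ and multiplication by $e^{i\theta}$ on $\C(1,-1)$, so it maps $(e^{i\alpha},0)$ to a point with $s = \cos\theta$. Invariance under both groups therefore forces constancy.

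The computation you leave as an expectation is settled by the same point-evaluation trick the paper uses for \eqref{eq:Zk Wk inner product 2k}; no addition formulas are needed. Three facts combine:
$\ProjV[W]\ProjV[W^\perp] g = \innerproduct{g}{\Phi_k}\Phi_k/\norm{\Phi_k}^2$ for $g \in \Hk[2k]$, the torus orbit of $\mathbf{e}_1$ is $\{(e^{i\alpha},0)\}$, and $\Phi_k(\mathbf{e}_1) = 1$. Evaluating $\ProjV[W]\ProjV[W^\perp]\ProjV[1, 1]\Phi_k$ at $\mathbf{e}_1$ therefore gives
\begin{equation*}
\frac{\innerproduct{\ProjV[1, 1]\Phi_k}{\Phi_k}}{\norm{\Phi_k}^2} = \frac{1}{2\pi}\int_0^{2\pi} \LegendreP(\cos\theta)\,d\theta .
\end{equation*}
This vanishes for odd $k$ and equals $\lambdak[k/2]$ for even $k$ by \eqref{eq:RHS} (substitute $\theta = 2\psi$). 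Since $\{\lambdak\}$ is decreasing, the supremum over $k \geq 1$ is $\lambdak[1] = 1/4$, attained in degree $4$. Your formula then gives $\gamma = 3/4$. This is sharp, with equality for $f(x) = \rho(\abs{x})\Phi_2(x/\abs{x})$.

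Your fallback argument (strict inequality for each $k$, plus decay from $\abs{\LegendreP(\cos\phi)} \lesssim (k\sin\phi)^{-1/2}$) also works once this formula is known. It only gives some unspecified $\gamma > 0$, however.
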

Observe that the inequality \eqref{eq:442} can be improved for $u_0 \in \Lrad(\R^2)$ by letting
\begin{equation}
f(x) = (u_0 \otimes u_0)(x) = u_0(x_1, x_2) u_0(x_3, x_4) .
\end{equation}
\citet{Gon2019} also conjectured the following, which imply refinements of the inequalities \eqref{eq:661} and \eqref{eq:481} for $u_0 \in \Leven(\R)$, respectively.
\begin{conjecture}[{\cite[Conjecture 1]{Gon2019}}] \label{conj:661}
Let $f \in \Leven(\R^3)$,
and let 
\begin{equation}
U(x, t) \coloneqq (e^{i t \Laplacian} f) (x, x, x) .
\end{equation}
Then we have
\begin{equation} \label{eq:661 improved}
	\norm{U}_{L^2(\R_x \times \R_t)}^2
	\leq \frac{1}{2 \sqrt{3}} \mleft( \norm{f}_{L^2(\R^3)}^2
	- \alpha \distfX[f][\Lrad(\R^3)]^2 \mright)
\end{equation}
for some constant $\alpha > 0$ independent of $f$.
\end{conjecture}
\begin{conjecture}[{\cite[Conjecture 2]{Gon2019}}] \label{conj:481}
Let $f \in \Leven(\R^4)$, and let 
\begin{equation}
	U(x, t) \coloneqq (e^{i t \Laplacian} f ) (x_1, x_2, x_1, x_2) .
\end{equation}
Then we have
\begin{equation} \label{eq:481 improved}
		\norm{U}_{L^2(\R^2_x \times \R_t)}^2
	\leq \frac{1}{4} \mleft( \norm{f}_{L^2(\R^4)}^2
	- \beta \distfX[f][\Lrad(\R^4)]^2 \mright)
\end{equation}
for some constant $\beta > 0$ independent of $f$.
\end{conjecture}
\citet[Corollary 1.2]{GZ2022} proved Conjecture \ref{conj:661} with the optimal constant
\begin{equation}
\alpha =  1 - \frac{8320}{3^9} = \frac{11363}{19683} \approx 0.57730 
\end{equation}
and obtained a refinement of the inequality \eqref{eq:661} for $u_0 \in \Leven(\R)$ by letting
\begin{equation}
	f(x) = (u_0 \otimes u_0 \otimes u_0)(x)  .
\end{equation}
Moreover, as claimed in \cite[page 1712]{GZ2022}, their proof also allows us to find the optimal constant $\gamma = 3/4$ in Theorem \ref{thm:442}.
Our aim in this paper is to prove the remaining Conjecture \ref{conj:481} with the optimal constant.
\begin{theorem} \label{thm:481}
Conjecture \ref{conj:481} holds with the constant
\begin{equation} \label{eq:sharp gamma}
\beta = \frac{17 - \sqrt{97}}{32} \approx 0.22347 ,
\end{equation}
which is the largest possible.
\end{theorem}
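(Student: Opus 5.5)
We would first rewrite the left-hand side of \eqref{eq:481 improved} as a quadratic form in $f$ given by an averaging operator, following the approach of \citet{Gon2019} and \citet{GZ2022}. Write $\R^4=\planeV\oplus\planeVperp$ with $\planeV=\set{(y,y)}{y\in\R^2}$ and $\planeVperp=\set{(y,-y)}{y\in\R^2}$. Take the space–time Fourier transform of $U$. For fixed $\xi_1+\xi_2$ and fixed $\abs{\xi_1}^2+\abs{\xi_2}^2$, the frequencies $\xi=(\xi_1,\xi_2)\in\R^4$ contributing to $\widehat U$ fill a circle in a translate of $\planeVperp$. Foschi's argument should then give
\begin{equation}
\norm{U}_{L^2(\R^2_x\times\R_t)}^2=\frac14\norm{\ProjV f}_{L^2(\R^4)}^2 ,
\end{equation}
where $\ProjV f=\int_{\SO(\planeVperp)} f\circ \Rotate\,d\Rotate$ averages $f$ over the rotations that act on $\planeVperp$ and fix $\planeV$ pointwise. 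Since this averaging commutes with the Fourier transform, it suffices to work with $f$ itself. The operator $\ProjV$ is an orthogonal projection, so $\norm{\ProjV f}^2=\innerproduct{\ProjV f}{f}$. The theorem is therefore equivalent to the inequality $\innerproduct{\ProjV f}{f}\le \norm f^2-\beta\,\distfX[f][\Lrad(\R^4)]^2$ for all $f\in\Leven(\R^4)$.

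Next we decompose $f$ into spherical harmonics. Write $f=\sum_k f_k$ with $f_k(r\omega)=\sum_j a_{k,j}(r)Y_{k,j}(\omega)$ and $Y_{k,j}\in\Hk$. Because $f$ is coordinate-wise even, each angular part lies in $\Hkeven$, the coordinate-wise even harmonics of degree $k$, and only even $k$ occur. Rotations preserve both $\abs{x}$ and each $\Hk$, so $\ProjV$ preserves every piece $L^2_{\rm rad}\otimes\Hk$. Hence
\begin{equation}
\innerproduct{\ProjV f}{f}=\sum_k \innerproduct{\ProjV f_k}{f_k}\quad\text{and}\quad \distfX[f][\Lrad(\R^4)]^2=\sum_{k\ge 2}\norm{f_k}^2 .
\end{equation}
The $k=0$ term contributes equally to both sides, so the problem reduces to computing
\begin{equation}
\lambda_k\coloneqq\norm{\Projeven\ProjV\Projeven}_{\Hk\to\Hk},\qquad k\ge2 \text{ even},
\end{equation}
and proving $\sup_{k\ge2}\lambda_k=(15+\sqrt{97})/32$, since then $\beta=1-\sup_{k\ge 2}\lambda_k$.

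To compute $\lambda_k$, note that on $\Hk$ the projection $\ProjV$ maps onto the harmonics that depend only on $x_\planeV$ and $\abs{x_{\planeVperp}}$. These have an explicit basis $\Ykmn$: a harmonic of degree $m$ in $x_\planeV$ times a Jacobi polynomial $\JacobiP[n][\alpha][\beta]$ in $(\abs{x_\planeV}^2-\abs{x_{\planeVperp}}^2)/\abs x^2$. An explicit orthonormal basis of $\Hkeven$ can likewise be built from Jacobi polynomials in the coordinate pairs $(x_1,x_2)$ and $(x_3,x_4)$, indexed by $\Ykmnab$. The compression is then a finite Gram-type matrix, whose entries are inner products of these two families and can be evaluated through zonal kernels. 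We compute it explicitly for small $k$. At $k=4$ we expect a $2\times2$ block whose largest eigenvalue is $(15+\sqrt{97})/32$. The corresponding eigenvector, multiplied by any radial profile, is an extremal $f$ for \eqref{eq:481 improved}, which proves that $\beta$ cannot be increased.

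The main obstacle is the uniform bound $\lambda_k\le(15+\sqrt{97})/32$ for all large $k$. We would first try to show that $\lambda_k$ is small for large $k$, for instance $\lambda_k=O(k^{-1/2})$. Such a bound should follow from the fact that the functions in the range of $\ProjV$ are nearly orthogonal to the coordinate-even harmonics of high degree. Concretely, we would bound the Hilbert–Schmidt norm of the compression by the trace of the product of the two projections. This trace equals an integral of the zonal kernel of $\Hk$ over pairs of the relevant symmetry orbits, and we would estimate it with classical uniform bounds on Jacobi and Gegenbauer polynomials. This would leave finitely many degrees $k$, which we would check by direct computation.
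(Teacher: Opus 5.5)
Your reduction agrees with the paper and is sound. This covers the identity $\norm{U}^2=\frac14\innerproduct{\ProjV f}{f}$ with $\planeV=\planeV_{1,1}$ (Proposition~\ref{prop:quadoratic form}), the splitting into the pieces $\Lk$, and the formula $\beta=1-\sup_{k\ge2}\norm{\Projeven\ProjV\Projeven}_{\Hk\to\Hk}$. It also covers sharpness via the degree-$4$ eigenvector. Your expected $2\times2$ block is the paper's $3\times3$ matrix restricted to vectors symmetric in the last two entries, and its characteristic polynomial is $16\lambda^2-15\lambda+2$.

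The gap is the step you call the main obstacle. The compression norm does \emph{not} decay, so your route cannot close the argument. Let $\Zk[2j]\in\Hk[4j]$ be the normalized harmonic depending only on $x_1x_3+x_2x_4=\frac12(\abs{x_\planeV}^2-\abs{x_{\planeVperp}}^2)$, i.e.\ a multiple of $\LegendreP[2j](2(\omega_1\omega_3+\omega_2\omega_4))$. Let $\Wk[2j]$ be the analogous function of $x_1x_3-x_2x_4$. Then $\Zk[2j]$ lies in the range of $\ProjV$, and every coordinate sign change sends it to $\Zk[2j]$ or $\Wk[2j]$. Hence $\Projeven\Zk[2j]=\frac12(\Zk[2j]+\Wk[2j])$, and
\begin{equation*}
\norm{\Projeven\ProjV\Projeven}_{\Hk[4j]\to\Hk[4j]}=\norm{\Projeven\ProjV}_{\Hk[4j]\to\Hk[4j]}^2\ge\norm{\Projeven\Zk[2j]}^2=\frac12\Bigl(1+\Bigl(2^{-2j}\binom{2j}{j}\Bigr)^2\Bigr)>\frac12
\end{equation*}
for every $j\ge1$, using $\innerproduct{\Zk[2j]}{\Wk[2j]}=(2^{-2j}\binom{2j}{j})^2$. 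So the heuristic that the range of $\ProjV$ is nearly orthogonal to high-degree even harmonics is false, and no bound $O(k^{-1/2})$ exists.

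The Hilbert--Schmidt route is worse still. The trace $\operatorname{tr}(\Projeven\ProjV\Projeven)$ on $\Hk$ is the sum of the eigenvalues, and it grows linearly in $k$. It equals $23/16$ at degree $4$ and $491/256$ at degree $8$, so its square root is already weaker than the trivial bound $1$.

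Since $(15+\sqrt{97})/32\approx0.7765$ is barely above $\frac34$, what you need for large degree is a structural bound of the form $\frac34+o(1)$. This is where the paper's key idea enters, and your plan has no substitute for it.
\begin{itemize}
\item By the sign-flip symmetry, $\Projeven\ProjVst[1][1]\Projeven=\Projmean\Projeven$ with $\Projmean=\frac14\sum_{\sigma,\tau}\ProjVst$. Because $\Vst^\perp=\Vst[-\sigma][-\tau]$, the four projections form two commuting pairs.
\item Then $4\Projmean$ is the sum of two orthogonal projections plus $\ProjVst[1][1]\ProjVst[-1][-1]+\ProjVst[1][-1]\ProjVst[-1][1]$. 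The Koornwinder basis shows that on $\Hk[2k]$ these two products are the rank-one projections onto $\Zk$ and $\Wk$.
\item The Duncan--Taylor formula (Proposition~\ref{prop:functional calculus}) then gives $\norm{\Projmean}_{\Hkeven[4j]\to\Hkeven[4j]}\le\frac14\bigl(3+(2^{-2j}\binom{2j}{j})^2\bigr)$.
\item Both products vanish on $\Hkeven[4j+2]$, which gives the bound $\frac12$ there.
\end{itemize}
As $\frac14\bigl(3+(5/16)^2\bigr)=\frac{793}{1024}<\frac{15+\sqrt{97}}{32}$, only degrees $4$ and $8$ remain for explicit computation. Degree $8$ genuinely has to be checked: its largest eigenvalue is about $0.716$, which is below $\frac34$.
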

As a consequence, we obtain the following refinement of the inequality \eqref{eq:481} for $u_0 \in \Leven(\R)$.
\begin{corollary} \label{cor:481}
	Let $u_0 \in \Leven(\R)$, and let 
	\begin{equation}
		f(x) = (u_0 \otimes u_0 \otimes u_0 \otimes u_0)(x)  .
	\end{equation}
	Then we have
	\begin{equation} 
\norm{ \norm{ e^{i t \Laplacian} u_0(x) }_{L^4(\R_x)} }_{L^8(\R_t)}^8
	\leq \frac{1}4 \mleft( \norm{u_0}_{L^2(\R)}^8 - \frac{17 - \sqrt{97}}{32} \distfX[f][\Lrad(\R^4)]^2 \mright).
	\end{equation}
\end{corollary}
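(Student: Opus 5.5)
The plan is to obtain Corollary \ref{cor:481} directly from Theorem \ref{thm:481}, applied to $f = u_0 \otimes u_0 \otimes u_0 \otimes u_0$. There are only two things to check. First, $f$ lies in $\Leven(\R^4)$ with $\norm{f}_{L^2(\R^4)}^2 = \norm{u_0}_{L^2(\R)}^8$. Second, the left-hand side of \eqref{eq:481 improved} for this $f$ equals $\norm{ \norm{ e^{i t \Laplacian} u_0(x) }_{L^4(\R_x)} }_{L^8(\R_t)}^8$.

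The first point is immediate. Since $u_0$ is even, $f$ is even in each coordinate separately. By Fubini, $\norm{f}_{L^2(\R^4)}^2 = \prod_{j=1}^4 \norm{u_0}_{L^2(\R)}^2$.

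For the second point, the key step is that the free propagator factorizes over tensor products. On $\R^4$ we have $e^{it\Laplacian} = e^{it\partial_{x_1}^2}\cdots e^{it\partial_{x_4}^2}$, as one sees from the Fourier multiplier $e^{-it\abs{\xi}^2} = \prod_j e^{-it\xi_j^2}$. Writing $v(y,t) = (e^{it\Laplacian}u_0)(y)$ on $\R$, this gives $(e^{it\Laplacian} f)(y_1,y_2,y_3,y_4) = v(y_1,t)v(y_2,t)v(y_3,t)v(y_4,t)$. Hence
\begin{equation}
U(x_1,x_2,t) = v(x_1,t)^2\, v(x_2,t)^2 .
\end{equation}
Applying Fubini in $(x_1,x_2)$ for each fixed $t$, we get
\begin{equation}
\norm{U}_{L^2(\R^2_x \times \R_t)}^2 = \int_\R \Bigl( \int_\R \abs{v(x,t)}^4 \, dx \Bigr)^2 dt = \norm{ \norm{ v(\cdot,t) }_{L^4(\R_x)} }_{L^8(\R_t)}^8 .
\end{equation}
Substituting this and the norm identity into \eqref{eq:481 improved}, with $\beta = (17-\sqrt{97})/32$ from Theorem \ref{thm:481}, yields exactly the claimed inequality.

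The only point needing care is justifying the factorization and Fubini for general $L^2$ data rather than Schwartz data. I would handle this by density. For Schwartz $u_0$ everything above is classical. Both sides of the final inequality are continuous in $u_0 \in L^2(\R)$:
\begin{itemize}
\item the right-hand side because $u_0 \mapsto u_0^{\otimes 4}$ is continuous from $L^2(\R)$ to $L^2(\R^4)$ and $\dist(\cdot,\Lrad)$ is $1$-Lipschitz;
\item the left-hand side by the Strichartz estimate \eqref{eq:481}, together with multilinearity of the quantity.
\end{itemize}
Alternatively, one can interpret $U$ for general $f$ through the same extension used to make sense of Theorem \ref{thm:481}. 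In either case, approximating even $u_0$ by even Schwartz functions preserves all hypotheses, and this completes the argument.
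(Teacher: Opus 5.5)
Your proposal is correct and follows the paper's route: apply Theorem~\ref{thm:481} to $f = u_0^{\otimes 4}$, using $\norm{f}_{L^2(\R^4)}^2 = \norm{u_0}_{L^2(\R)}^8$ and the factorization $U(x_1,x_2,t) = v(x_1,t)^2 v(x_2,t)^2$, details the paper leaves implicit. Your density step is a harmless extra, though continuity of the left-hand side needs only the linearity and boundedness of $u_0 \mapsto e^{it\Laplacian}u_0$ into $L^8_tL^4_x$, not multilinearity.
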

Moreover, using Corollary \ref{cor:481}, we also obtain the following.
\begin{corollary} \label{cor:extremizing sequence}
	Let $\{ u_n \}_{n \in \N} \subset \Leven(\R) \setminus \{0\}$ be an extremizing sequence of the inequality \eqref{eq:481}, that is, 
	\begin{equation}
	\lim_{n \to \infty} \mleft( \frac{ \norm{ \norm{ e^{i t \Laplacian} u_n(x) }_{L^4(\R_x)} }_{L^8(\R_t)} }{ \norm{u_n}_{L^2(\R)} } \mright)^8 = \frac{1}{4} .
	\end{equation}
	Then we have
	\begin{equation} 
	\lim_{n \to \infty} \frac{ \distfX[u_n][\Gaussians] }{ \norm{u_n}_{L^2(\R)} } = 0 , 
	\end{equation}
	where
	\begin{equation}
\Gaussians \coloneqq \set{ \R \ni x \longmapsto C\exp(Ax^2) }{ A,C \in \C, \, \Re A < 0 } .
	\end{equation}
\end{corollary}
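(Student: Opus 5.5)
We prove Corollary \ref{cor:extremizing sequence} by contradiction, combining Corollary \ref{cor:481} with the known characterization of extremizers. By homogeneity we normalize $\norm{u_n}_{L^2(\R)} = 1$. Set $f_n = u_n \otimes u_n \otimes u_n \otimes u_n$, so $\norm{f_n}_{L^2(\R^4)} = 1$. Corollary \ref{cor:481} gives
\begin{equation}
\frac{17-\sqrt{97}}{32} \distfX[f_n][\Lrad(\R^4)]^2 \leq 1 - 4 \norm{ \norm{ e^{i t \Laplacian} u_n(x) }_{L^4(\R_x)} }_{L^8(\R_t)}^8 \longrightarrow 0 .
\end{equation}
Hence there are radial $F_n \in \Lrad(\R^4)$ with $\norm{f_n - F_n}_{L^2} \to 0$. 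The work is to transfer this closeness from the tensor $f_n$ to a statement about the single function $u_n$.

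\textbf{Step 1: reduce to a statement about tensor powers.} We use the following structural fact: if a function on $\R^4$ is both a tensor product $u\otimes u\otimes u\otimes u$ and radial, then it is a centered Gaussian tensor. Indeed, $u(x_1)u(x_2)u(x_3)u(x_4) = \phi(\abs{x}^2)$ forces the Cauchy functional equation for $\log u(\sqrt{\cdot})$, giving $u(x) = Ce^{Ax^2}$. We need a quantitative version of this, which we obtain by averaging $f_n$ over $\SO(4)$.

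\textbf{Step 2: extract a Gaussian via compactness of the rotation average.} Let $\Rotatemean$ be the orthogonal projection onto $\Lrad(\R^4)$, so that $\norm{f_n - \Rotatemean f_n} \to 0$. Argue by contradiction: suppose $\distfX[u_n][\Gaussians] \geq \varepsilon$ along a subsequence. Since the Strichartz functional is invariant under the symmetries preserving evenness (scaling $u \mapsto \lambda^{1/2} u(\lambda \cdot)$ and the Schrödinger flow, i.e.\ the pseudo-conformal/lens transformations, which map $\Gaussians$ to itself), we may renormalize $u_n$. A cleaner route avoids compactness altogether. Write $\innerproduct{f_n}{\Rotatemean f_n} = \norm{\Rotatemean f_n}^2 \to 1$, and expand $u_n$ in the Hermite basis (the eigenbasis of the harmonic oscillator). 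Under the Bargmann transform, tensor products of Hermite functions of total degree $N$ span spaces on which the rotations act. Radial functions in $\R^4$ with Hermite degree $N$ are exactly multiples of the Laguerre-type element. We then need that $\norm{\Rotatemean(u^{\otimes 4})}$ close to $1$ forces $u$ close to a dilated, phase-modulated Gaussian $e^{Ax^2}$. By the symmetry group we can first fix a normalization: by scaling and the lens transform we choose the representative of $u_n$ maximizing $\abs{\innerproduct{u_n}{e^{-x^2/2}}}$ among the orbit, just as in Gonçalves's and Gonçalves--Zagier's treatments.

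\textbf{Step 3: conclude.} With this normalization, if $u_n \rightharpoonup u$ weakly, the tensors converge weakly, and $\Rotatemean$ commutes with the weak limit. Norm closeness of $f_n$ to $\Lrad$ then yields $\norm{u}=1$ and $u^{\otimes 4}$ radial, hence $u\in\Gaussians$ by Step 1. Weak convergence with norm preservation gives strong convergence, so $u_n \to u \in \Gaussians$, a contradiction.

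The main obstacle is excluding loss of compactness, i.e.\ the case $u_n \rightharpoonup 0$ after normalization. Our first attempt is to use that the normalization makes the Gaussian coefficient $\abs{\innerproduct{u_n}{e^{-x^2/2}}}$ maximal. We would then show, by an explicit Hermite-coefficient computation, that $\norm{\Rotatemean f_n}^2$ is bounded by a quantity strictly less than $1$ when this coefficient is small. Alternatively, we would invoke a profile decomposition for the $L^8_tL^4_x$ estimate: non-concentrating even sequences lose a fixed fraction of the Strichartz norm, which contradicts extremality.
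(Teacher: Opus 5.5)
\textbf{Verdict: genuine gap.} Your opening step is correct: with $\norm{u_n}_{L^2}=1$, Corollary~\ref{cor:481} gives $\dist(f_n,\Lrad(\R^4))\to 0$. The gap is everything after that. Passing from ``$u_n^{\otimes 4}$ is nearly radial'' to ``$u_n$ is nearly a centered Gaussian'' is the entire remaining content of the corollary, and your argument does not establish it.

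In Step~3, weak convergence $u_n\rightharpoonup u$ together with $\norm{(1-\Projrad)f_n}\to 0$ only shows that $u^{\otimes 4}$ is radial, hence $u\in\Gaussians\cup\{0\}$ by your Step~1. It does \emph{not} give $\norm{u}=1$, contrary to what you assert. Ruling out $u=0$, which you flag as the main obstacle, would still not suffice. You would also have to exclude partial loss of mass $0<\norm{u}<1$, i.e.\ show that $r_n=u_n-u\rightharpoonup 0$ actually tends to $0$ in norm. That requires controlling how the cross terms of $(u+r_n)^{\otimes 4}$ interact with $\Projrad$.

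Neither proposed remedy is carried out.
\begin{itemize}
\item The Hermite-coefficient claim (a small orbit-maximal Gaussian coefficient forces $\norm{\Projrad f_n}$ to stay away from $1$) is itself a nontrivial quantitative rigidity statement of the same type you need, and no argument is given for it.
\item The profile-decomposition sentence is only a slogan. Carried out fully (precompactness of extremizing sequences modulo symmetries, the characterization of extremizers as Gaussians, and an evenness argument removing translation and modulation parameters), it would be a different, purely qualitative proof that bypasses Corollary~\ref{cor:481} entirely. That is not what you wrote.
\end{itemize}
Note also that Step~2 announces a route that ``avoids compactness altogether'' and then immediately returns to normalization and weak limits.

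What is missing is a \emph{quantitative} version of your Step~1, and the paper obtains it with no compactness at all. First, since $\Lrad(\R^4)\subset \Lrad(\R^2)\otimes\Lrad(\R^2)$, write $g=u\otimes u=a+b$ with $a=\Projrad g$. Then $\dist(g\otimes g,\Lrad(\R^4))^2\ge \norm{g}^4-\norm{a}^4\ge \norm{b}^2\norm{g}^2$, that is,
\[
\dist(u^{\otimes 4},\Lrad(\R^4))^2\ge \norm{u}^4\,\dist(u\otimes u,\Lrad(\R^2))^2 .
\]
Second, the stability theorem of \citet[Theorem 1.1]{Chr2015} gives $\dist(u\otimes u,\Lrad(\R^2))\gtrsim \norm{u}\,\dist(u,\Gaussians)$. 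Combined with Corollary~\ref{cor:481}, these yield
\[
\frac{\dist(u_n,\Gaussians)^2}{\norm{u_n}^2}\lesssim \frac{\dist(f_n,\Lrad(\R^4))^2}{\norm{u_n}^8}\to 0
\]
directly. This is in fact a quantitative bound in terms of the Strichartz deficit, which a compactness argument would not provide.
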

This follows from Corollary \ref{cor:481} and
\begin{align}
\distfX[u_n \otimes u_n \otimes u_n \otimes u_n][\Lrad(\R^4)]^2
&\geq \norm{u_n}_{L^2(\R)}^4 \distfX[u_n \otimes u_n][\Lrad(\R^2)]^2 \\ 
&\gtrsim \norm{u_n}_{L^2(\R)}^6 \distfX[u_n][\Gaussians]^2 .
\end{align}
The first inequality comes from $\Lrad(\R^4) \subset \Lrad(\R^2) \otimes \Lrad(\R^2)$, and the second one is due to \citet[Theorem 1.1]{Chr2015}.
We remark that \citet[Theorem 1.4]{GNO2026} recently established an analogue of Corollary \ref{cor:extremizing sequence} under the non-negativity assumption for $u_0$ instead of the evenness.
\subsection*{Organization of the paper}
In Section \ref{section:preliminaries}, we introduce some notation and basic facts as preliminaries.
In Section \ref{section:equivalent condition}, we give an equivalent characterization of Conjecture \ref{conj:481}.
We prove Conjecture \ref{conj:481} with a non-optimal constant $\beta = 3/16$ in Section \ref{section:proof}, and with the optimal constant $\beta = (17 - \sqrt{97})/32$ in Section \ref{section:optimal constant}.  

\section{Preliminaries} \label{section:preliminaries}
\subsection{\texorpdfstring{$L^2$}{L²} functions invariant under orthogonal transformations} \label{section:invariant functions}
First, we recall that $\Lrad(\R^d)$ and $\Leven(\R^d)$ are the subspaces of $L^2(\R^d)$ consisting of all radially symmetric functions and of all \emph{coordinate-wise} even functions, as we defined in Section \ref{section:introduction}.
We write $\Projrad$ and $\Projeven$ for the orthogonal projections onto these spaces, respectively.
It is easy to see that we have
\begin{gather}
\Projrad f(x) = \int_{\omega \in \S^{d-1}} f( \abs{x} \omega ) \, dS(\omega) ,
\label{eq:Projrad} \\
\Projeven f(x) = \frac{1}{2^d} \sum_{ \varepsilon \in \{ -1, 1 \}^d } f( \varepsilon_1 x_1, \varepsilon_2 x_2, \ldots, \varepsilon_d x_d ) ,
\label{eq:Projeven} \\
\distfX[f][\Lrad(\R^d)] = \norm{ (1 - \Projrad) f }_{L^2(\R^d)} ,
\label{eq:dinstance formula} \noeqref{eq:dinstance formula}
\end{gather}
where $d S$ denotes the probability spherical surface measure of $\S^{d-1}$ in \eqref{eq:Projrad}.

Now let $V \subset \R^d$ be a $(d-2)$-dimensional linear subspace. 
We write $\LV(\R^d)$ for the subspace of $L^2(\R^d)$ consisting of all functions satisfying
\begin{equation}
f(\Rotate x) = f(x)
\end{equation}
for every rotation $\Rotate \in \SO(d)$ which fixes $V$ (i.e., $\Rotate x = x$ for every $x \in V$), and $\ProjV$ denotes the corresponding orthogonal projection. 
Note that this can be written as the rotation average
\begin{equation} \label{eq:ProjV}
	\ProjV f(x) = \frac{1}{2\pi} \int_{\theta \in \intervalco{0}{2\pi}} f( \RotateV(\theta) x) \, d\theta, 
\end{equation}
where $\RotateV(\theta) \in \SO(d)$ is the rotation through angle $\theta$ which fixes the plane $V$. 
For instance, when $d = 4$ and $\planeV_0 \coloneqq \Span \{ (1, 0, 0, 0), (0, 0, 1, 0) \}$, 
we have
\begin{equation} \label{eq:ProjV for V0}
\ProjV[\planeV_0] f(x) = \frac{1}{2\pi} \int_{\theta \in \intervalco{0}{2\pi}} f(x_1, \sqrt{x_2^2 + x_4^2} \cos{\theta}, x_3 , \sqrt{x_2^2 + x_4^2} \sin{\theta}) \, d\theta .
\end{equation}

Observe that we have
\begin{equation}
\Lrad \subset \Leven \cap \LV ,
\end{equation}
so that 
\begin{gather} 
\Projrad \Projeven = \Projeven \Projrad = \Projrad , 
\label{eq:commute Projrad Projeven} \noeqref{eq:commute Projrad Projeven} \\
\Projrad \ProjV = \ProjV \Projrad = \Projrad
\label{eq:commute Projrad ProjV} \noeqref{eq:commute Projrad ProjV} 
\end{gather}
for each $(d-2)$-dimensional subspace $V \subset \R^d$.
It is also easy to see that 
\begin{equation} \label{eq:ProjV changing V} 
\ProjV[\orthogonalE \planeV]f(E x) = \ProjV[\planeV] [f \circ \orthogonalE] ( x )
\end{equation}
holds for every orthogonal matrix $\orthogonalE \in O(d)$.
When $d=4$, one can compute $\ProjV f$ combining
\eqref{eq:ProjV for V0} and \eqref{eq:ProjV changing V}.

An important fact is that, when $d = 4$, the orthogonal complement of each $2$D plane $V$ is also a $2$D plane, so that $\ProjVperp$ is well-defined.
Moreover, we have
\begin{equation} \label{eq:commute ProjV ProjVperp}
\ProjVperp \ProjV[V] = \ProjV \ProjVperp .
\end{equation}
To see this, observe that $\RotateV(\theta_1)$ and $\RotateVperp(\theta_2)$ commute for all rotation angles $\theta_1, \theta_2$.
In fact, for each $x \in \R^4$, we have
\begin{equation}
\RotateV(\theta_1) \RotateVperp(\theta_2) x
= \RotateVperp(\theta_2) \RotateV(\theta_1) x
= \RotateVperp(\theta_2) x_V + \RotateV(\theta_1) x_{\perp} , 
\end{equation}
where $x_V$ and $x_\perp$ denote the orthogonal projections of $x$ onto $V$ and $V^\perp$, respectively.
Therefore, by \eqref{eq:ProjV}, we have 
\begin{align} \label{eq:ProjV ProjVperp formula}
&\quad \ProjV \ProjVperp f(x) = \ProjVperp \ProjV f(x) \\
& = \frac{1}{4\pi^2} \int_{\theta_2 \in \intervalco{0}{2\pi}} \int_{\theta_1 \in \intervalco{0}{2\pi}} f( \RotateVperp(\theta_2) x_V + \RotateV(\theta_1) x_{\perp} ) \, d\theta_1 \, d\theta_2 .
\end{align}

We remark that the averaging formulas \eqref{eq:Projrad}, \eqref{eq:Projeven}, and \eqref{eq:ProjV} also define orthogonal projections on $L^2(\S^{d-1})$.
By an abuse of notation, we use the same symbols $\Projrad$, $\Projeven$, and $\ProjV$ for those projections.
Then the commuting formulas \eqref{eq:commute Projrad Projeven}, \eqref{eq:commute Projrad ProjV}, and \eqref{eq:ProjV ProjVperp formula} are also valid on $L^2(\S^{d-1})$.

In what follows, we mainly use the polar coordinates
\begin{align} \label{eq:Koornwinder coordinates}
	\omega(\varphi, \psi, \theta) =
	\begin{pmatrix}
		\cos \psi \cos \theta \\
		\cos \varphi \sin \theta \\
		\sin \psi \cos \theta \\
		\sin \varphi \sin \theta
	\end{pmatrix} 
\end{align}
on $\S^3$ rather than the Cartesian coordinates.
When no confusion is possible, we write $f(\varphi, \psi, \theta)$ for $f(\omega(\varphi, \psi, \theta))$.
Then we have
\begin{align}
	\Projeven f(\omega) &= \frac{1}{16} \sum_{ \varepsilon \in \{ -1, 1 \}^2 } \sum_{ j \in \{0, 1\}^2 } f( \varepsilon_1 \varphi + j_1 \pi , \varepsilon_2 \psi + j_2 \pi , \theta ) , 
	\label{eq:Projeven polar}\\
	\ProjV[\planeV_0] f(\omega) &= \frac{1}{2\pi} \int_{\varphi \in \intervalco{0}{2\pi}} f(\varphi, \psi, \theta) \, d\varphi , 
	\label{eq:ProjV polar}\\
	\ProjV[\planeVperp_0] f(\omega) &= \frac{1}{2\pi} \int_{\psi \in \intervalco{0}{2\pi}} f(\varphi, \psi, \theta) \, d\psi
	\label{eq:ProjVperp polar}
\end{align}
for $\planeV_0 = \Span \{ (1, 0, 0, 0), (0, 0, 1, 0) \}$.

\subsection{Spherical harmonics} \label{subsection:spherical harmonics}
We assume $d \geq 2$ in this section.
Let $\Hk(\S^{d-1})$ be the space of spherical homogeneous harmonic polynomials of degree $k$ for each $k \in \N = \{0, 1, 2,  \ldots \}$.
It is well known that the orthogonal decomposition
\begin{equation} \label{eq:spherical harmonic decomposition on sphere}
L^2(\S^{d-1}) = \bigoplus_{k \in \N} \Hk(\S^{d-1})
\end{equation}
holds. 
As a consequence, using polar coordinates $x = r \omega$, $(r, \omega) \in \intervaloo{0}{\infty} \times \S^{d-1}$ and letting $\Lk (\R^d) \coloneqq  L^2( \intervaloo{0}{\infty}, r^{d-1} \, dr ) \otimes \Hk(\S^{d-1})$, we obtain 
\begin{equation} \label{eq:spherical harmonic decomposition on R^d}
	L^2(\R^d) = \bigoplus_{k \in \N} \Lk (\R^d) .
\end{equation}
We write $\ProjHk$ for the orthogonal projection onto $\Lk$ for each $k \in \N$. Note that $\Lk[0]$ = $\Lrad$, so that $\ProjHk[0] = \Projrad$.
Moreover, $\Lk(\R^d)$ is invariant with respect to both $\Projeven$ and $\ProjV$ due to \eqref{eq:Projeven} and \eqref{eq:ProjV}, so that
\begin{gather} 
\Projeven \ProjHk[k] = \ProjHk[k] \Projeven = \ProjHk[k] \Projeven \ProjHk[k] , 
\label{eq:commute Projeven ProjHk} \noeqref{eq:commute Projeven ProjHk} \\
\ProjV \ProjHk[k] = \ProjHk[k] \ProjV = \ProjHk[k] \ProjV \ProjHk[k] 
\label{eq:commute ProjV ProjHk} \noeqref{eq:commute ProjV ProjHk}
\end{gather}
for each $k \in \N$.

We remark that these identities \eqref{eq:commute Projeven ProjHk} and \eqref{eq:commute ProjV ProjHk} are also valid on $L^2(\S^{d-1})$ by understanding $\ProjHk[k]$ as the orthogonal projection from $L^2(\S^{d-1})$ onto $\Hk(\S^{d-1})$. 
As a consequence, $\Projeven$ and $\ProjV$ can be understood as orthogonal projections on $\Hk(\S^{d-1})$ for each $k \in \N$. 
In order to study these operators on $\Hk(\S^3)$, the following orthonormal basis given by \citet{Koo1973} is useful. 
\begin{proposition}[{\cite[Theorem 4.2]{Koo1973}}] \label{prop:Koornwinder}
	For each $(k, m, n) \in \N^3$ satisfying $(k - m - n)/2 \in \N$ and for $(a, b) \in \{ 0, 1 \}^2$, we define
	\begin{equation}
	\begin{aligned} 
	&\quad \Ykmnab(\omega) \\
	&= \normalizeN \Re( (-i)^a (\omega_2 + i \omega_4)^m ) \Re( (-i)^b (\omega_1 + i \omega_3)^n ) \\
	&\quad \times \JacobiP[(k-m-n)/2][m][n]( \omega_1^2 + \omega_3^2 - \omega_2^2 - \omega_4^2 ) \\ 
	&= \normalizeN (\sin\theta)^m (\cos\theta)^n \cos(m \varphi - a \pi / 2) \cos(n \psi - b \pi / 2) \\
	&\quad \times \JacobiP[(k-m-n)/2][m][n](\cos{2\theta}) ,
	\end{aligned} \label{eq:Koornwinder functions}
		\end{equation}
	where $\{ \JacobiP \}_{k \in \N}$ denotes the Jacobi polynomials, and $\normalizeN \in \intervaloo{0}{\infty}$ is a positive normalizing constant.
Then the functions
\begin{equation} \label{eq:Koornwinder basis}
\Set{ \Ykmnab }{ \begin{matrix}
 (m, n) \in \N^2 , \, (a, b) \in \{ 0, 1 \}^2 , \\
  (k - m - n)/2 \in \N, \\
  m = 0 \implies a = 0, \\
  n = 0 \implies b = 0
	\end{matrix} }
\end{equation}
form an orthonormal basis of $\Hk(\S^3)$ for each $k \in \N$. 
	In addition, the normalizing constant can be written as
\begin{equation} \label{eq:normalizeN}
	\normalizeN = 
	2 \mleft(
	\frac{k+1}{2^{ \delta(m) + \delta(n) }} \cdot
	\frac{ \factorial{j} \, \factorial{(j+m+n)} }{ \factorial{(j+m)} \, \factorial{(j+n)} }
	\mright)^{1/2} , \quad
	j \coloneqq (k - m - n)/2 , 
\end{equation}
where $\delta$ denotes the discrete delta function.
\end{proposition}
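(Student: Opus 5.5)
We prove the proposition directly rather than quoting \cite{Koo1973}. The plan has four steps: each $\Ykmnab$ lies in $\Hk(\S^3)$; the family is orthogonal; it has $\dim \Hk(\S^3)$ elements; and its norms match \eqref{eq:normalizeN}.

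\emph{Setup.} Identify $\R^4$ with $\C^2$ via $z = x_1 + i x_3$ and $w = x_2 + i x_4$. Then the coordinates \eqref{eq:Koornwinder coordinates} read $z = \cos\theta\, e^{i\psi}$, $w = \sin\theta\, e^{i\varphi}$, with $\theta \in [0, \pi/2]$. A direct Jacobian computation gives the probability measure
\begin{equation}
dS = \frac{1}{2\pi^2} \sin\theta \cos\theta \, d\theta \, d\varphi \, d\psi .
\end{equation}

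\emph{Step 1: each $\Ykmnab$ is the restriction of a harmonic homogeneous polynomial of degree $k$.} Put $j = (k-m-n)/2$. The function $\Re((-i)^a w^m) \Re((-i)^b z^n)$ is a homogeneous polynomial of degree $m+n$. Since $\cos 2\theta = (\abs{z}^2 - \abs{w}^2)/\abs{x}^2$ on the sphere, we can multiply $\JacobiP[j][m][n](\cos 2\theta)$ by $\abs{x}^{2j}$. This gives a homogeneous polynomial of total degree $m + n + 2j = k$, and it suffices to show it is harmonic.

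We write the Laplacian on $\S^3$ in the coordinates $(\varphi, \psi, \theta)$:
\begin{equation}
\Delta_{\S^3} = \frac{1}{\sin\theta\cos\theta}\, \partial_\theta\bigl(\sin\theta\cos\theta\, \partial_\theta\bigr) + \frac{1}{\sin^2\theta}\, \partial_\varphi^2 + \frac{1}{\cos^2\theta}\, \partial_\psi^2 .
\end{equation}
Insert the ansatz $(\sin\theta)^m (\cos\theta)^n \cos(m\varphi - a\pi/2) \cos(n\psi - b\pi/2)\, p(\cos 2\theta)$. Substituting $t = \cos 2\theta$, the equation $\Delta_{\S^3} Y = -k(k+2) Y$ should reduce exactly to the Jacobi differential equation for $\JacobiP[j][m][n](t)$ with $k = m + n + 2j$. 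This is the only real computation in the proof, and I expect it to be the main (routine but error-prone) obstacle.

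\emph{Step 2: orthogonality.} Take two basis elements with different $(m, a)$ or different $(n, b)$. They are orthogonal by Fourier orthogonality in $\varphi$ or in $\psi$ (the cosine and sine factors). Now fix $(m, n, a, b)$ and compare $j \neq j'$. Under $t = \cos 2\theta$ we have
\begin{equation}
(\sin\theta)^{2m} (\cos\theta)^{2n} \sin\theta\cos\theta \, d\theta \propto (1-t)^m (1+t)^n \, dt .
\end{equation}
So orthogonality follows from the orthogonality of Jacobi polynomials with parameters $(m, n)$. Elements with different $k$ are orthogonal anyway, since they lie in different spaces $\Hk(\S^3)$.

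\emph{Step 3: completeness by counting.} For fixed $k$, count the tuples $(m, n, a, b)$ with $m + n \le k$ and $m + n \equiv k \pmod 2$. Each such pair $(m, n)$ contributes $(2 - \delta(m))(2 - \delta(n))$ elements. Summing over $s = m + n \in \{k, k-2, \ldots\}$, a pair with $s \ge 1$ gives $4s$ elements (the $s-1$ pairs with $m, n \geq 1$ contribute $4$ each, and the two boundary pairs contribute $2$ each), while $s = 0$ gives $1$. The total is $(k+1)^2 = \dim \Hk(\S^3)$. By Step 2 the family is linearly independent, so it is a basis.

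\emph{Step 4: the normalizing constant.} We compute $\norm{\Ykmnab}^2_{L^2(\S^3)}$ in three factors:
\begin{itemize}
\item The $\varphi$-average of $\cos^2(m\varphi - a\pi/2)$ is $1/2$ if $m \ge 1$ and $1$ if $m = 0$; the same holds in $\psi$ with $n$. This produces the factor $2^{\delta(m) + \delta(n)}$ after inversion.
\item The $\theta$-integral becomes $2^{-m-n-2}\int_{-1}^1 (1-t)^m (1+t)^n \JacobiP[j][m][n](t)^2 \, dt$.
\item The classical Jacobi norm is
\begin{equation}
\frac{2^{m+n+1}}{2j+m+n+1}\cdot\frac{(j+m)!\,(j+n)!}{j!\,(j+m+n)!} .
\end{equation}
\end{itemize}
Combining these with the prefactor from the measure and using $2j + m + n + 1 = k + 1$ yields exactly the square of \eqref{eq:normalizeN}.
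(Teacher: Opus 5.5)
Your argument is correct. It necessarily takes a different route from the paper, which does not prove this proposition at all but simply quotes \cite[Theorem 4.2]{Koo1973}.

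You verify everything directly in the coordinates \eqref{eq:Koornwinder coordinates}:
\begin{itemize}
\item harmonicity by separation of variables;
\item orthogonality by Fourier orthogonality in $\varphi$ and $\psi$, which works because the measure $\frac{1}{2\pi^2}\sin\theta\cos\theta\,d\theta\,d\varphi\,d\psi$ factors;
\item completeness by the count $(k+1)^2=\dim\mathcal{H}_k(\mathbb{S}^3)$;
\item the constant from the classical Jacobi norm.
\end{itemize}

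I checked the step you left as an expectation, and it goes through. With $t=\cos 2\theta$ one has $\frac{1}{\sin\theta\cos\theta}\partial_\theta(\sin\theta\cos\theta\,\partial_\theta)=4\,\partial_t\bigl((1-t^2)\partial_t\bigr)$, together with $\sin^2\theta=(1-t)/2$ and $\cos^2\theta=(1+t)/2$. Conjugating by $(1-t)^{m/2}(1+t)^{n/2}$ then gives exactly the Jacobi operator with parameters $(m,n)$ and eigenvalue $j(j+m+n+1)$, since $k(k+2)=(m+n)(m+n+2)+4j(j+m+n+1)$. Your three factors also combine correctly to $\norm{Y}^2=N^2\,2^{\delta(m)+\delta(n)-2}(k+1)^{-1}\frac{(j+m)!\,(j+n)!}{j!\,(j+m+n)!}$, which is \eqref{eq:normalizeN}.

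One simplification: inside a fixed $\mathcal{H}_k$ the index $j=(k-m-n)/2$ is determined by $(m,n)$. So two distinct members of \eqref{eq:Koornwinder basis} always differ in $(m,a)$ or $(n,b)$, and Fourier orthogonality alone suffices. The Jacobi-orthogonality part of your Step 2 only concerns different $k$, where orthogonality is automatic anyway.

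What your route buys over the citation is an explicit check of the normalization against the probability measure $dS$ used throughout the paper. This is the source of the leading factor $2$ and the $2^{\delta(m)+\delta(n)}$ in \eqref{eq:normalizeN}. It matters later, for instance in \eqref{eq:Projection Zk}, where $Z_k$ must be a unit vector for exactly this inner product. The citation is shorter but leaves that reconciliation to the reader.
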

The definition and basic properties of the Jacobi polynomials $\{ \JacobiP \}_{k \in \N}$ can be found in \citet[\href{https://dlmf.nist.gov/18}{Chapter 18}]{DLMF} and \citet[\textsection 7.39, \textsection 8.96]{GR2014}, for example. 
We remark that the Jacobi polynomials coincide with the Gegenbauer polynomials $\{ \GegenbauerC \}_{k \in \N}$ when $\alpha = \beta = \mu - 1/2$ (up to a constant depending on the normalization convention).
The Gegenbauer polynomials play an important role in the study of optimal constants of smoothing estimates for the free Schr\"{o}dinger and Dirac equations; see \citet*{BSS2015} and \citet{Suz2025, Suz2025_2}.
\subsection{A mean of four orthogonal projections} \label{subsection:projections}
The following elementary observations play an essential role in our proof of Theorem \ref{thm:481}.
\begin{proposition} \label{prop:functional calculus}
	Let $H$ be a Hilbert space, let $H_0$ be a subspace of $H$, and let $\{P_j\}_{j=1}^{4}$ be orthogonal projections on $H$ satisfying
	\begin{equation} \label{eq:commuting pairs}
	P_1 P_2 = P_2 P_1, \quad P_3 P_4 = P_4 P_3 .
	\end{equation}
	We define $T \colon H \to H$ by
	\begin{equation}
		T \coloneqq \frac{1}{4} \sum_{j=1}^{4} P_j .
	\end{equation}
	Then the following hold.
	\begin{enumerate}[label=\textup{(P\ref{prop:functional calculus}.\roman*)}]
		\item \label{item:functional calculus bound 3+}
		If $(P_1 P_2, P_3 P_4) \ne (0, 0)$, then we have
		\begin{equation} \label{eq:functional calculus bound 3+}
			\norm{ T }_{H \to H} \leq \frac{1}{4} ( 3 + \norm{ P_1 P_2 P_3 P_4 }_{H \to H} ).
		\end{equation}	
		\item \label{item:functional calculus bound 2}
		If $P_1 P_2 = P_3 P_4 = 0$ on $H_0$, then we have
		\begin{equation} \label{eq:functional calculus bound 2}
		\norm{ T }_{H_0 \to H} \leq \frac{1}{2} .
		\end{equation}
		\item \label{item:functional calculus bound 3}
If 
		\begin{equation}
		\Re \innerproduct{P_1 P_2 f}{P_3 P_4 f} \leq 0
		\end{equation}
		holds for every $f \in H_0$, then we have
		\begin{equation} \label{eq:functional calculus bound 3}
		\norm{ T }_{H_0 \to H} \leq \frac{3}{4} .
		\end{equation}	
	\end{enumerate}
\end{proposition}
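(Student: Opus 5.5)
The plan is to reduce everything to statements about the two projections $Q \coloneqq P_1 P_2$ and $R \coloneqq P_3 P_4$. By \eqref{eq:commuting pairs}, $Q$ and $R$ are orthogonal projections. Since $P_1$ and $P_2$ commute, $A \coloneqq P_1 + P_2 - 2Q = (P_1 - Q) + (P_2 - Q)$ is a sum of two mutually orthogonal projections. Indeed $(P_1 - Q)(P_2 - Q) = Q - Q - Q + Q = 0$, so $A$ is an orthogonal projection with $AQ = QA = 0$. Defining $B \coloneqq P_3 + P_4 - 2R$ in the same way, we get
\begin{equation}
P_1 + P_2 = 2Q + A \leq I + Q, \qquad P_3 + P_4 = 2R + B \leq I + R,
\end{equation}
and hence $4T \leq 2I + Q + R$ in the sense of quadratic forms.

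The key lemma is the two-projection bound. For $\norm{f} = 1$, put $a = \norm{Qf}$, $b = \norm{Rf}$ and $s = a^2 + b^2$. Then
\begin{equation}
s = \innerproduct{f}{(Q+R)f} \leq \norm{Qf + Rf} = \bigl( s + 2 \Re \innerproduct{Qf}{Rf} \bigr)^{1/2} .
\end{equation}
Writing $\innerproduct{Qf}{Rf} = \innerproduct{Qf}{QR\, Rf}$ gives $\abs{\innerproduct{Qf}{Rf}} \leq \norm{QR}\, ab \leq \norm{QR}\, s/2$. Hence $s^2 \leq s(1 + \norm{QR})$, that is, $s \leq 1 + \norm{QR}$. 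Since $T$ is self-adjoint and nonnegative, $\norm{T} = \sup_{\norm{f} = 1} \innerproduct{Tf}{f} \leq \frac14 (2 + 1 + \norm{P_1P_2P_3P_4})$. This is \ref{item:functional calculus bound 3+}. The hypothesis $(Q, R) \ne (0,0)$ only matters for the sharpness of the form $1 + \norm{QR}$; the upper bound itself is all we need.

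For \ref{item:functional calculus bound 2}, take $f \in H_0$. Then $Qf = Rf = 0$, so $(P_1 + P_2) f = Af$ and $(P_3 + P_4) f = Bf$, each of norm at most $\norm{f}$. Thus $\norm{4Tf} \leq 2 \norm{f}$, which gives $1/2$.

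For \ref{item:functional calculus bound 3}, the same computation with $\Re \innerproduct{Qf}{Rf} \leq 0$ gives $s \leq 1$ for $f \in H_0$, and so $\innerproduct{4Tf}{f} \leq 3 \norm{f}^2$. The main obstacle is that $H_0$ need not be $T$-invariant. A bound on the quadratic form over $H_0$ does not directly control $\norm{Tf}$: the crude route via $\norm{Tf}^2 \leq \innerproduct{Tf}{f}$ only yields $\sqrt{3}/2$. I would therefore expand
\begin{equation}
\norm{4Tf}^2 = \norm{2Qf + Af}^2 + \norm{2Rf + Bf}^2 + 2 \Re \innerproduct{2Qf + Af}{2Rf + Bf},
\end{equation}
using $\norm{2Qf + Af}^2 = 4a^2 + \norm{Af}^2$ with $a^2 + \norm{Af}^2 \leq \norm{f}^2$, and similarly for the other term. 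I would then bound the cross terms using $\Re \innerproduct{Qf}{Rf} \leq 0$ together with the identities $\innerproduct{Af}{Rf} = \innerproduct{Af}{R(f - Af)}$-type manipulations, aiming for a quadratic inequality whose maximum is $9\norm{f}^2$. If this direct expansion fails to close, the fallback is to check whether, in the intended application, $H_0$ is invariant under the $P_j$ (or under $T$). In that case $\norm{T}_{H_0 \to H}$ equals the supremum of the quadratic form over $H_0$, and the bound $3/4$ follows from the quadratic-form estimate above.
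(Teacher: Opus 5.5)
Your proofs of (i) and (ii) are correct. For (i) you argue differently from the paper, which quotes the Duncan--Taylor formula $\norm{Q+R}=1+\norm{QR}$. Your Cauchy--Schwarz estimate ($s\le\norm{(Q+R)f}$ and $\norm{(Q+R)f}^2=s+2\Re\innerproduct{Qf}{Rf}\le s(1+\norm{QR})$) proves the half of that formula you need from scratch. You are also right that $(Q,R)\ne(0,0)$ is needed only for the equality there, not for the bound.

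Item (iii), however, is not proved, and this is a genuine gap. You identify the obstacle correctly: a bound on $\innerproduct{Tf}{f}$ over a non-invariant $H_0$ does not control $\norm{Tf}$. But the proposed expansion is left open. Moreover, your grouping $P_1+P_2=2Q+A$ is the wrong one for a norm estimate. For $f\in QH$ one has $\norm{(2Q+A)f}=2\norm{f}$, so the triangle inequality on that grouping only gives the trivial $\norm{4Tf}\le 4\norm{f}$.

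The missing idea is to group instead as $P_1+P_2=(A+Q)+Q$. Here $A+Q=1-(1-P_1)(1-P_2)$ is itself an orthogonal projection (you already showed $AQ=0$), and likewise $P_3+P_4=(B+R)+R$. Then
\[
\norm{4Tf}\le\norm{(A+Q)f}+\norm{(B+R)f}+\norm{(Q+R)f}\le 2\norm{f}+\norm{(Q+R)f} .
\]
Your own two inequalities already bound the last term in norm, not merely as a quadratic form. If $\Re\innerproduct{Qf}{Rf}\le 0$, then
\[
\norm{(Q+R)f}^2\le\innerproduct{(Q+R)f}{f}\le\norm{(Q+R)f}\norm{f},
\]
so $\norm{(Q+R)f}\le\norm{f}$ for every $f\in H_0$. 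This gives $3/4$ with no invariance hypothesis, and it is exactly the paper's argument. The paper obtains (i) and (ii) from the same triangle inequality as well.

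Your fallback would establish only the weaker statement for $T$-invariant $H_0$. That happens to cover the paper's applications, because $\Projmean$ commutes with $\Projeven$ and with the $\ProjHk$ and therefore preserves $\Hkeven[j]^\perp$. But it is not the proposition as stated.
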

\begin{proof}[Proof of Proposition \ref{prop:functional calculus}]
Since $(P_1, P_2)$ and $(P_3, P_4)$ are commuting pairs of orthogonal projections, the operators
\begin{equation}
1 - (1 - P_1) (1 - P_2) , \quad 1 - (1 - P_3) (1 - P_4)
\end{equation}
are orthogonal projections, and the identity
\begin{equation}
4T = ( 1 - (1 - P_1) (1 - P_2) ) + ( 1 - (1 - P_3) (1 - P_4) ) + P_1 P_2 + P_3 P_4 
\end{equation}
holds.
Thus, we have
\begin{equation} \label{eq:functional calculus ineq}
4 \norm{T}_{H_0 \to H} \leq 2 + \norm{ P_1 P_2 + P_3 P_4  }_{H_0 \to H} .
\end{equation}
This bound immediately implies \ref{item:functional calculus bound 2}. 
Similarly, \ref{item:functional calculus bound 3+} follows from \eqref{eq:functional calculus ineq} and the Duncan--Taylor formula \cite[Theorem 7]{DT1976}
\begin{equation}
\norm{ P_1 P_2 + P_3 P_4 }_{H \to H} = 1 + \norm{ P_1 P_2 P_3 P_4 }_{H \to H} ,
\end{equation}
which is valid because $P_1 P_2$ and $P_3 P_4$ are orthogonal projections on $H$ satisfying $(P_1 P_2, P_3 P_4) \ne (0, 0)$.
In order to prove \ref{item:functional calculus bound 3}, observe that
\begin{align}
\norm{ (P_1 P_2 + P_3 P_4) f }_H^2
&= \norm{ P_1 P_2 f }_H^2 + 2 \Re \innerproduct{P_1 P_2 f}{P_3 P_4 f} + \norm{ P_3 P_4 f }_H^2 \\
&\leq \innerproduct{P_1 P_2 f}{ f }_{H} + \innerproduct{P_3 P_4 f}{ f }_{H} \\
&\leq \norm{ (P_1 P_2 + P_3 P_4) f }_H \norm{ f }_H
\end{align}
holds for every $f \in H_0$ under the assumption of \ref{item:functional calculus bound 3}, so that
\begin{equation}
\norm{P_1 P_2 + P_3 P_4}_{H_0 \to H} \leq 1 .
\end{equation}
Combining this with \eqref{eq:functional calculus ineq}, we obtain \ref{item:functional calculus bound 3}.
\end{proof}

\section{An equivalent condition for Conjecture \ref{conj:481}} \label{section:equivalent condition}
For each $(\sigma,\tau) \in \{-1,1\}^2$, we define the $2$D plane $V_{\sigma, \tau} \subset \R^4$ by
\begin{equation} \label{eq:plane Vst}
	\Vst \coloneqq \Span \mleft \{ \frac{1}{\sqrt{2}} (1,0,\sigma,0), \frac{1}{\sqrt{2}} (0,1,0,\tau) \mright \}.
\end{equation}
For simplicity, we write $\ProjV[\sigma, \tau] \coloneqq \ProjV[V_{\sigma, \tau}]$.
We begin with the following result given by \citet{HZ2006}.
\begin{proposition}[{\cite[Theorem 1.3]{HZ2006}}] \label{prop:quadoratic form}
	Let $f \in L^2(\R^4)$, and let 
	\begin{equation}
		U(x, t) \coloneqq (e^{i t \Laplacian} f ) (x_1, x_2, x_1, x_2) .
	\end{equation}
	 Then we have
	\begin{equation} \label{eq:projecton formula}
		\norm{U}_{L^2(\R^2_x \times \R_t)}^2
		= \frac{1}{4} \innerproduct{ \ProjV[1, 1] f }{f}_{L^2(\R^4)} .
	\end{equation}
\end{proposition}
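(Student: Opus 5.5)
The plan is to compute $\norm{U}_{L^2}^2$ on the Fourier side by Plancherel in $(x,t)$. The key observation is that restricting to the diagonal $(x_1,x_2,x_1,x_2)$ only sees the frequency component along the plane $\Vst[1][1]$. The free evolution then reduces, in the orthogonal directions $\Vst[1][1]^\perp=\Vst[-1][-1]$, to an average over circles. That circle average is exactly the rotation average $\ProjV[1,1]$.

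First, for $f$ Schwartz (density and continuity of both sides handle general $f$), I write
\begin{equation}
e^{it\Laplacian}f(y)=(2\pi)^{-4}\int_{\R^4} e^{iy\cdot\xi-it\abs{\xi}^2}\widehat f(\xi)\,d\xi .
\end{equation}
At $y=(x_1,x_2,x_1,x_2)$ the phase becomes $x_1(\xi_1+\xi_3)+x_2(\xi_2+\xi_4)$. I then choose orthonormal coordinates $\xi=(\eta,\zeta)$, with $\eta\in\R^2$ the coordinates of $\xi$ in $\Vst[1][1]$ and $\zeta\in\R^2$ those in $\Vst[-1][-1]$. Thus $\eta=((\xi_1+\xi_3)/\sqrt2,(\xi_2+\xi_4)/\sqrt2)$, the Jacobian is $1$, and $\abs{\xi}^2=\abs{\eta}^2+\abs{\zeta}^2$. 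In these coordinates
\begin{equation}
U(x,t)=(2\pi)^{-4}\int_{\R^2}\int_{\R^2} e^{i\sqrt2\,x\cdot\eta-it(\abs{\eta}^2+\abs{\zeta}^2)}\widehat f(\eta,\zeta)\,d\zeta\,d\eta .
\end{equation}

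Next I pass to polar coordinates $\zeta=\rho\omega$ and set $s=\rho^2$, so $d\zeta=\tfrac12\,ds\,d\omega$ with $d\omega$ the arclength on the unit circle. Then $U$ is the space-time inverse Fourier transform of the function
\begin{equation}
(\sqrt2\eta,\;\abs{\eta}^2+s)\longmapsto \tfrac12\int_{\S^1}\widehat f(\eta,\sqrt s\,\omega)\,d\omega,
\end{equation}
up to explicit Jacobians. Applying Plancherel in $(x,t)\in\R^3$ and changing variables back gives
\begin{equation}
\norm{U}^2_{L^2}=c\int_{\R^2}\int_0^\infty\Abs{\int_{\S^1}\widehat f(\eta,\sqrt s\,\omega)\,d\omega}^2 ds\,d\eta
= c'\int_{\R^4}\abs{\ProjV[1,1]\widehat f(\xi)}^2\,d\xi .
\end{equation}
The last equality holds because the inner circle integral equals $2\pi$ times the average of $\widehat f$ over the orbit of the rotations fixing $\Vst[1][1]$, and that average is constant on each such circle. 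Since $\ProjV[1,1]$ is an orthogonal projection, the right-hand side equals $c'\innerproduct{\ProjV[1,1]\widehat f}{\widehat f}$.

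Finally, $\ProjV[1,1]$ is an average of $f\circ R$ over rotations $R$, and $\widehat{f\circ R}=\widehat f\circ R$. Hence $\ProjV[1,1]$ commutes with the Fourier transform, and Plancherel in $\R^4$ converts the expression into $c''\innerproduct{\ProjV[1,1]f}{f}$.

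The main obstacle is bookkeeping: tracking the factors $(2\pi)^{\pm}$, the $\sqrt2$ from $x\cdot(\xi_1+\xi_3,\xi_2+\xi_4)=\sqrt2\,x\cdot\eta$, and the $\tfrac12$ from $s=\rho^2$, so that the constant comes out to exactly $1/4$. As a consistency check on the constant, I would test $f=\exp(-\abs{x}^2)$. This $f$ is radial, so $\ProjV[1,1]f=f$, and both sides can be computed in closed form.
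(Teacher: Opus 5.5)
Your argument is correct. It is the standard Fourier-side derivation: Plancherel in $(x,t)$, splitting frequencies along $\Vst[1][1]$ and $\Vst[-1][-1]=\Vst[1][1]^\perp$, and recognizing the circle average in $\zeta$ as $\ProjV[1,1]$. This is essentially the Hundertmark--Zharnitsky proof that the paper only cites, and it makes explicit the paper's remark that nothing depends on $f=u_0\otimes u_0$.

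The bookkeeping does close. With your normalization of $\widehat f$ one gets $\norm{U}_{L^2}^2=(64\pi^4)^{-1}\norm{\ProjV[1,1]\widehat f}_{L^2}^2=\frac14\innerproduct{\ProjV[1,1]f}{f}_{L^2}$. Your Gaussian test gives $\pi^2/16$ on both sides, and since the constant in your structural identity is universal, that single test already pins it to $1/4$.
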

To be precise, \citet{HZ2006} state the identity \eqref{eq:projecton formula} only for $f = u_0 \otimes u_0$ with $u_0\in L^2(\R^2)$. 
Nevertheless, their proof also works for arbitrary $f\in L^2(\R^4)$. See also \citet[Lemma 6]{Car2009} and \citet[p.~1712]{GZ2022}.
An immediate consequence is that the inequality \eqref{eq:481 improved} holds with 
\begin{equation} \label{eq:conj 481 equivalent P11}
	\beta = 1 - \norm{\Projeven \ProjV[1, 1] (1-\Projrad) \Projeven}_{L^2(\R^4) \to L^2(\R^4)} 
\end{equation}
for every $f \in \Leven(\R^4)$.
Therefore, Conjecture \ref{conj:481} is equivalent to 
\begin{equation}
	\norm{\Projeven \ProjV[1, 1] (1-\Projrad) \Projeven}_{L^2(\R^4) \to L^2(\R^4)} < 1.
\end{equation}
However, it is not straightforward to estimate this complicated operator.
Our aim in this section is to show the following result, which allows us to bound the operator norm via Proposition \ref{prop:functional calculus}.
\begin{proposition} \label{prop:Projmean identity}
	We define $\Projmean \colon L^2(\R^4) \to L^2(\R^4)$ by
	\begin{equation} \label{eq:Projmean def}
	\Projmean \coloneqq \frac{1}{4} \sum_{(\sigma,\tau) \in \{-1,1\}^2} \ProjV[\sigma, \tau] .
	\end{equation}
	Then we have
	\begin{gather} 
	\Projeven \Projmean = \Projmean \Projeven = \Projeven \ProjVst \Projeven , \quad \forall (\sigma, \tau) \in \{-1, 1\}^2 ,
	\label{eq:commute Projeven Projmean} \\
	\Projrad \Projmean = \Projmean \Projrad = \Projrad ,
	\label{eq:commute Projrad Projmean} \\
		\ProjHk \Projmean = \Projmean \ProjHk = \ProjHk \Projmean \ProjHk .
	\label{eq:commute ProjHk Projmean} 
	\end{gather}
	As a consequence, the inequality \eqref{eq:481 improved} holds with 
	\begin{equation} \label{eq:481 improved abstract}
		\beta = 1 - \norm{\Projmean (1-\Projrad)}_{\Leven(\R^4) \to \Leven(\R^4)} 
	\end{equation}
	for every $f \in \Leven(\R^4)$, and this is the largest possible value of $\beta$.
\end{proposition}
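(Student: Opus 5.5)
The plan is to derive all three identities from symmetry relations among the four planes $\Vst$, using formula \eqref{eq:ProjV changing V}, and then to convert the sharp constant question into an operator norm statement via Proposition \ref{prop:quadoratic form}.

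\emph{Step 1: the identity \eqref{eq:commute Projeven Projmean}.} Let $E_{\varepsilon}$ be the diagonal sign matrix with entries $\varepsilon \in \{-1,1\}^4$. We have $E_\varepsilon \Vst = V_{\sigma\varepsilon_1\varepsilon_3, \tau\varepsilon_2\varepsilon_4}$, since $E_\varepsilon(1,0,\sigma,0)=\varepsilon_1(1,0,\sigma\varepsilon_1\varepsilon_3,0)$, and similarly for the second vector. Write $(f\circ E_\varepsilon)$ for the reflected function. Then \eqref{eq:ProjV changing V} gives
\[
\ProjVst(f\circ E_\varepsilon)=(\ProjV[E_\varepsilon\Vst] f)\circ E_\varepsilon .
\]
Averaging over $\varepsilon$ as in \eqref{eq:Projeven}, we get $\Projeven \ProjVst \Projeven = \Projeven \ProjV[\sigma',\tau'] \Projeven$ for all $(\sigma',\tau')$, because each sign pair is reached by choosing $\varepsilon$ appropriately. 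Averaging the same identity over $(\sigma,\tau)$ shows that the family $\{\ProjVst\}$ is permuted by conjugation with every $E_\varepsilon$. Hence $\Projmean$ commutes with each reflection, and therefore with $\Projeven$. Consequently
\[
\Projeven\Projmean=\Projeven\Projmean\Projeven=\frac14\sum_{\sigma,\tau}\Projeven\ProjVst\Projeven=\Projeven\ProjV[\sigma,\tau]\Projeven .
\]

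\emph{Step 2: the identities \eqref{eq:commute Projrad Projmean} and \eqref{eq:commute ProjHk Projmean}.} These are immediate from \eqref{eq:commute Projrad ProjV} and \eqref{eq:commute ProjV ProjHk}: each $\ProjVst$ satisfies them, and the identities are preserved under averaging.

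\emph{Step 3: the consequence.} For $f\in\Leven$, Proposition \ref{prop:quadoratic form} together with Step 1 gives
\[
4\norm{U}^2=\innerproduct{\ProjV[1,1]f}{f}=\innerproduct{\Projmean f}{f}.
\]
Decompose $f=\Projrad f+g$ with $g=(1-\Projrad)f$. The function $g$ is still even, since $\Projrad$ commutes with $\Projeven$. By \eqref{eq:commute Projrad Projmean} the cross terms vanish, because $\innerproduct{\Projmean g}{\Projrad f}=\innerproduct{g}{\Projrad f}=0$. So
\[
\innerproduct{\Projmean f}{f}=\norm{\Projrad f}^2+\innerproduct{\Projmean g}{g}.
\]
$\Projmean$ is positive and self-adjoint, being an average of orthogonal projections, and it maps $\Leven$ into itself. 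Moreover $\Projmean(1-\Projrad)=(1-\Projrad)\Projmean$ by \eqref{eq:commute Projrad Projmean}, so this operator is positive and self-adjoint on $\Leven$. Hence
\[
\sup_{g}\frac{\innerproduct{\Projmean g}{g}}{\norm{g}^2}=\norm{\Projmean(1-\Projrad)}_{\Leven\to\Leven}=:\lambda ,
\]
where the supremum is over even $g\neq 0$ with $\Projrad g=0$. This yields $4\norm{U}^2\le\norm{f}^2-(1-\lambda)\norm{g}^2$, and $\norm{g}=\distfX[f][\Lrad(\R^4)]$ by \eqref{eq:dinstance formula}.

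\emph{Step 4: sharpness.} Take $f=g$ with $g\perp\Lrad$ even and Rayleigh quotient close to $\lambda$. For such $f$ the two sides differ only by $(\lambda-1+\beta)\norm{g}^2$ up to small error, so any $\beta>1-\lambda$ fails.

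The main obstacle is the sign bookkeeping in Step 1, namely checking that the reflections really permute the four planes transitively. The rest is routine.
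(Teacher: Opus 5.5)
Your proof is correct and follows essentially the paper's route. The sign reflections $E_\varepsilon$ permute the four planes $\Vst$, which gives \eqref{eq:commute Projeven Projmean}; the other two identities follow by averaging; and the sharp $\beta$ comes from splitting $f=\Projrad f+(1-\Projrad)f$, using that the cross terms vanish, and using the Rayleigh-quotient characterization of $\norm{\Projmean(1-\Projrad)}_{\Leven\to\Leven}$. The only difference is bookkeeping: the paper averages $(\ProjVst f)\circ E_\varepsilon=\ProjVst[\varepsilon_1\varepsilon_3\sigma][\varepsilon_2\varepsilon_4\tau] f$ (for even $f$) over $\varepsilon$ to get $\Projeven\ProjVst\Projeven=\Projmean\Projeven$ in one step, whereas you first show that $\Projmean$ commutes with every reflection and that the four compressions $\Projeven\ProjVst\Projeven$ coincide.
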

\begin{proof}[Proof of Proposition \ref{prop:Projmean identity}]
	For each $\varepsilon \in \{-1, 1\}^4$, we write $A_\varepsilon \coloneqq \operatorname{diag}(\varepsilon_1,\ldots,\varepsilon_4)$
	and define $\mathcal{A}_\varepsilon \colon L^2(\R^4) \to L^2(\R^4)$ by $\mathcal{A}_\varepsilon f(x) \coloneqq f(A_\varepsilon x)$.
	Then we have 
	\begin{align}
	A_\varepsilon V_{\sigma,\tau} &= V_{\varepsilon_1 \varepsilon_3 \sigma,\, \varepsilon_2 \varepsilon_4 \tau}
\shortintertext{due to \eqref{eq:plane Vst}, which implies}
	\mathcal{A}_\varepsilon \ProjV[\sigma, \tau] \mathcal{A}_\varepsilon &= \ProjVst[\varepsilon_1 \varepsilon_3 \sigma][\varepsilon_2 \varepsilon_4 \tau] 
	\label{eq:APA = P}
	\end{align}
	for each $\varepsilon \in \{-1, 1\}^4$ and $(\sigma, \tau) \in \{-1, 1\}^2$.
On the other hand, we have
	\begin{equation}
	\Leven(\R^4) = \set{ f \in L^2(\R^4) }{ \forall \varepsilon \in \{-1, 1\}^4 , \quad \mathcal{A}_\varepsilon f = f } ,
	\end{equation} 
	so that
	\begin{equation} \label{eq:commute A Projeven}
	\mathcal{A}_\varepsilon \Projeven = \Projeven \mathcal{A}_\varepsilon = \Projeven .
	\end{equation}
	Combining \eqref{eq:APA = P} and \eqref{eq:commute A Projeven}, we get
	\begin{equation}
	\mathcal{A}_\varepsilon \ProjV[\sigma, \tau] \Projeven = \ProjVst[\varepsilon_1 \varepsilon_3 \sigma][\varepsilon_2 \varepsilon_4 \tau] \Projeven , 
	\end{equation}
	and averaging this with respect to $\varepsilon \in \{-1, 1\}^4$ for fixed $(\sigma, \tau) \in \{-1, 1\}^2$ yields
	\begin{equation}
	\Projeven \ProjV[\sigma, \tau] \Projeven = \Projmean \Projeven .
	\end{equation}
	Similarly, we obtain
	\begin{equation}
		\Projeven \ProjV[\sigma, \tau] \Projeven = \Projeven \Projmean 
	\end{equation}
	by averaging
	\begin{equation}
	\Projeven \ProjV[\sigma, \tau] \mathcal{A}_\varepsilon = \Projeven \ProjVst[\varepsilon_1 \varepsilon_3 \sigma][\varepsilon_2 \varepsilon_4 \tau] .
	\end{equation}
	This proves \eqref{eq:commute Projeven Projmean}.
	
	The other commutation relations \eqref{eq:commute Projrad Projmean} and \eqref{eq:commute ProjHk Projmean} are immediate from \eqref{eq:commute Projrad ProjV} and \eqref{eq:commute ProjV ProjHk}.
	
	Now we prove \eqref{eq:481 improved abstract}.
	Let $f \in \Leven(\R^4)$ and write $g = \Projrad f$, $h = (1 - \Projrad)f$. 
	Then we have
	\begin{gather}
			\norm{ \ProjVst[1][1] f }_{L^2(\R^4)}^2 \underset{\eqref{eq:commute Projeven Projmean}}{=} \innerproduct{\Projmean f}{f}_{L^2(\R^4)} , \\
		\Projmean g = \Projmean \Projrad f \underset{\eqref{eq:commute Projrad Projmean}}{=} \Projrad f = g , \\
		\Projmean h = \Projmean (1 - \Projrad) \Projeven h , \\
		\innerproduct{g}{\Projmean  h}_{L^2} = \innerproduct{\Projmean g}{h}_{L^2} = \innerproduct{g}{h}_{L^2} = 0 ,
	\end{gather}
	so that
	\begin{align}
		\norm{ \ProjVst[1][1] f }_{L^2(\R^4)}^2
		&= \innerproduct{\Projmean g}{g}_{L^2} + \innerproduct{\Projmean g}{h}_{L^2} + \innerproduct{\Projmean h}{g}_{L^2} + \innerproduct{\Projmean h}{h}_{L^2} \\
		&\leq \norm{g}_{L^2}^2 + 0 + 0 + \norm{\Projmean (1-\Projrad) \Projeven}_{L^2 \to L^2} \norm{h}_{L^2}^2 \\
		&=\norm{f}_{L^2}^2 - (1 - \norm{\Projmean (1-\Projrad)}_{\Leven \to \Leven} ) \norm{h}_{L^2}^2 .
	\end{align}
	This shows that the inequality \eqref{eq:481 improved} holds with $\beta$ given in \eqref{eq:481 improved abstract}, which is sharp because
	\begin{equation*}
		\sup_{h \in (1 - \Projrad) \Leven \setminus \{0\}} \frac{\innerproduct{\Projmean (1-\Projrad) \Projeven h}{h}_{L^2}}{ \norm{h}_{L^2}^2 } = \norm{\Projmean (1-\Projrad)}_{\Leven \to \Leven} . \qedhere
	\end{equation*}
\end{proof}
Proposition \ref{prop:Projmean identity} reveals that Conjecture \ref{conj:481} is equivalent to
\begin{align}
\norm{\Projmean (1-\Projrad)}_{\Leven(\R^4) \to \Leven(\R^4)}  &< 1 .
\label{eq:conj 481 equivalent Q}
\shortintertext{
We are going to prove the upper bound
}
\norm{\Projmean (1-\Projrad)}_{\Leven(\R^4) \to \Leven(\R^4)}  &\leq \frac{13}{16}
	\label{eq:481 norm upper bound even} \noeqref{eq:481 norm upper bound even}
\shortintertext{
in Section \ref{section:proof}, and to give the exact value
}
\norm{\Projmean (1-\Projrad)}_{\Leven(\R^4) \to \Leven(\R^4)}  &= \frac{15 + \sqrt{97}}{32}
	\label{eq:481 norm exact value}
\end{align}
in Section \ref{section:optimal constant}.

\section{Proof of Conjecture \ref{conj:481}} \label{section:proof}
Our aim in this section is to prove the following upper bounds.
\begin{theorem} \label{thm:Projmean ProjHk upperbound}
	We define a sequence $\{ \lambdak \}_{k \in \N}$ by
	\begin{equation} \label{eq:lambdak}
		\lambdak[k] = \mleft( 2^{-2k} \binom{2k}{k} \mright)^2 .
	\end{equation}
	Then we have
	\begin{align} 
		\norm{\Projmean}_{\Hk[2k+1](\S^{3}) \to \Hk[2k+1](\S^3)} &\leq 1/2 , 
		\label{eq:Projmean ProjHk upperbound 2k+1} \\
		\norm{\Projmean}_{\Hkeven[4k+2](\S^{3}) \to \Hkeven[4k+2](\S^3)} &\leq 1/2 , 
		\label{eq:Projmean ProjHk upperbound 4k+2 even} \\
		\norm{\Projmean}_{\Hkeven[4k+2]^\perp(\S^{3}) \to \Hkeven[4k+2]^\perp(\S^3)} &\leq 3/4 , 
		\label{eq:Projmean ProjHk upperbound 4k+2 evenperp} \\		 
		\norm{\Projmean}_{\Hkeven[4k](\S^{3}) \to \Hkeven[4k](\S^3)} &\leq \frac{1}{4} (3 + \lambdak) , 
		\label{eq:Projmean ProjHk upperbound 4k even} \\
		\norm{\Projmean}_{\Hkeven[4k]^\perp(\S^{3}) \to \Hkeven[4k]^\perp(\S^3)} &\leq 3/4 ,
		\label{eq:Projmean ProjHk upperbound 4k evenperp} 
	\end{align}
	where $\Hkeven[j](\S^{3}) \coloneqq \Projeven \Hk[j](\S^{3})$, and $\Hkeven[j]^\perp(\S^{3})$ denotes its orthogonal complement taken in $\Hk[j](\S^{3})$.
	As a consequence, we have
	\begin{equation} 	\label{eq:481 norm upper bound}
		\norm{\Projmean(1 - \Projrad)}_{L^2(\R^4) \to L^2(\R^4)} \leq \frac{13}{16} ,
	\end{equation}
	so that Conjecture \ref{conj:481} holds with the constant $\beta = 3/16$.
\end{theorem}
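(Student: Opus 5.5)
Everything reduces to Proposition~\ref{prop:functional calculus} applied on each $\Hk[j](\S^3)$ with $P_1=\ProjV[1,1]$, $P_2=\ProjV[-1,-1]$, $P_3=\ProjV[1,-1]$, $P_4=\ProjV[-1,1]$. Since $\Vst[1][1]^\perp=\Vst[-1][-1]$ and $\Vst[1][-1]^\perp=\Vst[-1][1]$ (checked directly from \eqref{eq:plane Vst}), the commutation \eqref{eq:commute ProjV ProjVperp} gives the hypothesis \eqref{eq:commuting pairs}. By \eqref{eq:commute ProjV ProjHk}, $\Projmean$ preserves each $\Hk[j](\S^3)$, and by \eqref{eq:commute Projeven Projmean} it also commutes with $\Projeven$. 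Hence it preserves both $\Hkeven[j]$ and its complement, and it suffices to bound $\Projmean$ on each of these pieces separately.

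\textbf{Identifying the commuting products.} By \eqref{eq:ProjV ProjVperp formula}, $P_1P_2$ averages over the torus $\SO(2)\times\SO(2)$ acting separately on $V$ and $V^\perp$. I would rotate coordinates by an orthogonal $\orthogonalE$ with $\orthogonalE \Vst[1][1]=\planeV_0$ and use \eqref{eq:ProjV changing V} together with the polar descriptions \eqref{eq:ProjV polar}, \eqref{eq:ProjVperp polar}. In these coordinates $P_1P_2$ becomes averaging in both $\varphi$ and $\psi$. On Koornwinder's basis (Proposition~\ref{prop:Koornwinder}) it is therefore the projection onto $\Ykmnab[j][0][0][0][0]$. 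This function is nonzero only for even degree and equals, up to normalization, $\LegendreP[j/2](\cos 2\theta)$, where $\cos2\theta=\abs{x_V}^2-\abs{x_{V^\perp}}^2$. The same holds for $P_3P_4$ with the other pair of planes. Consequently:
\begin{itemize}
\item On odd degrees, $P_1P_2=P_3P_4=0$, and \ref{item:functional calculus bound 2} gives \eqref{eq:Projmean ProjHk upperbound 2k+1}.
\item On even degree $j$, $P_1P_2$ and $P_3P_4$ are rank-one projections onto zonal-type functions $Z_{1}$ and $Z_{2}$. In original coordinates these are Legendre polynomials in $q_1(\omega)=2(\omega_1\omega_3+\omega_2\omega_4)$ and $q_2(\omega)=2(\omega_1\omega_3-\omega_2\omega_4)$ respectively.
\end{itemize}

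\textbf{Even degree $j$: coordinate parities.} The reflection $\omega_1\mapsto-\omega_1$ swaps $q_1\leftrightarrow -q_2$. Both $Z_i$ are even in $q_i$, since $j/2$ has a definite parity relative to $j$. Sorting by parity under the sign flips, $Z_1\pm Z_2$ lies either in $\Hkeven[j]$ or is orthogonal to it.
\begin{itemize}
\item On $\Hkeven[j]^\perp$: the range of $P_1P_2$ and $P_3P_4$ intersected with this space is at most one-dimensional (spanned by $Z_1-Z_2$, or related), so $\Re\innerproduct{P_1P_2f}{P_3P_4f}$ is a multiple of $\Re\bigl(\innerproduct{f}{Z_1}\overline{\innerproduct{f}{Z_2}}\bigr)\innerproduct{Z_1}{Z_2}$. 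For $f$ in the odd part under $\mathcal A_\varepsilon$, one has $\innerproduct{f}{Z_2}=-\innerproduct{f}{Z_1}$, so this is $\le0$ once $\innerproduct{Z_1}{Z_2}\ge0$. Then \ref{item:functional calculus bound 3} gives $3/4$, yielding \eqref{eq:Projmean ProjHk upperbound 4k+2 evenperp} and \eqref{eq:Projmean ProjHk upperbound 4k evenperp}.
\item On $\Hkeven[j]$ with $j\equiv2\pmod 4$: $\Legendre$ of odd degree $j/2$ is odd, so $Z_i$ is odd under $\omega_1\mapsto-\omega_1$ and hence orthogonal to $\Hkeven$. Thus $P_1P_2=P_3P_4=0$ there, and \ref{item:functional calculus bound 2} gives \eqref{eq:Projmean ProjHk upperbound 4k+2 even}.
\item On $\Hkeven[4k]$: use \ref{item:functional calculus bound 3+}, where $\norm{P_1P_2P_3P_4}=\abs{\innerproduct{Z_1}{Z_2}}^2/(\norm{Z_1}^2\norm{Z_2}^2)$. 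So I need the normalized correlation of two Legendre-$P_{2k}$ functions in $q_1$ and $q_2$ on $\S^3$.
\end{itemize}

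\textbf{Main obstacle: computing the overlap.} The crux is evaluating this overlap $\innerproduct{Z_1}{Z_2}/\norm{Z_1}^2$ and showing it equals $2^{-2k}\binom{2k}{k}$ in absolute value. I would compute it via the addition theorem / Funk--Hecke on $\S^3\cong\SU(2)$. The functions $\omega\mapsto \LegendreP[2k](q_i)$ should be matrix coefficients of the $\SO(4)$ representation on $\Hk[4k]$ fixed by a torus. Their inner product then reduces to evaluating a Legendre polynomial of the "angle" between the tori. That angle corresponds to $\cos=0$, giving $\LegendreP[2k](0)=(-1)^k2^{-2k}\binom{2k}{k}$. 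Squaring gives $\lambdak$, which proves \eqref{eq:Projmean ProjHk upperbound 4k even}. The sign also confirms $\innerproduct{Z_1}{Z_2}$ has controlled sign for the previous bullet (possibly after choosing the sign of $Z_2$ suitably).

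\textbf{Conclusion.} Write $L^2(\R^4)=\bigoplus \Lk$. On $\Lk$, $\Projmean$ acts through its action on $\Hk(\S^3)$. The space $(1-\Projrad)$ removes only $k=0$, and every bound above is $\le 1/4(3+\lambdak[1])=1/4(3+1/4)=13/16$, since $\lambdak$ is decreasing. This gives \eqref{eq:481 norm upper bound}, and Proposition~\ref{prop:Projmean identity} then yields $\beta=3/16$.
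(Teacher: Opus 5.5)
Your architecture matches the paper's: Proposition~\ref{prop:functional calculus} applied with $P_1P_2$, $P_3P_4$ the rank-one projections onto $Z_1=\LegendreP[j/2](q_1)$ and $Z_2=\LegendreP[j/2](q_2)$, sorted by $\Projeven$. The odd-degree bound and the final assembly are fine. The crux, however, contains two errors that happen to cancel.

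First, with $e_i=Z_i/\norm{Z_i}$ one has $P_1P_2P_3P_4f=\innerproduct{f}{e_2}\innerproduct{e_2}{e_1}e_1$. So $\norm{P_1P_2P_3P_4}=\abs{\innerproduct{e_1}{e_2}}$, not its square. This agrees with Duncan--Taylor, since $e_1e_1^*+e_2e_2^*$ has top eigenvalue $1+\abs{\innerproduct{e_1}{e_2}}$.

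Second, in degree $4k$ the normalized overlap is not $2^{-2k}\binom{2k}{k}$ but $\lambdak$ itself. Already on $\Hk[4]$, the degree-$8$ moments of $\S^3$ give $\innerproduct{Z_1}{Z_2}=1/20$ and $\norm{Z_1}^2=1/5$, so the overlap is $1/4=\lambdak[1]$, not $1/2$. Your heuristic misses that under $\textup{SU}(2)\times\textup{SU}(2)\to\SO(4)$ one has $\Hk[4k](\S^3)\cong\pi_{2k}\otimes\pi_{2k}$. A torus-fixed vector is therefore a tensor product of two weight-zero vectors, and the overlap is a product of two Legendre values, $\LegendreP[2k](0)\cdot\LegendreP[2k](0)=\lambdak$.

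These are genuine errors, as correcting only one of them shows. The true overlap combined with your norm formula would give $(3+\lambdak[1]^2)/4=49/64$ on $\Hkeven[4]$. That contradicts Proposition~\ref{prop:norm k=4}, since $(15+\sqrt{97})/32\approx 0.7765$.

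The same overlap is exactly what the two $3/4$ bounds need, and there your argument has real gaps.
\begin{itemize}
\item On $\Hkeven[4k]^\perp$ you need $\innerproduct{Z_1}{Z_2}\geq 0$ for $Z_2(\omega)=Z_1(\omega_1,-\omega_2,\omega_3,\omega_4)$, the normalization that makes $Z_1+Z_2$ even. Your value $\LegendreP[2k](0)$ has sign $(-1)^k$. Re-signing $Z_2$ cannot help, because $\Re\innerproduct{P_1P_2f}{P_3P_4f}$ does not depend on that choice.
\item On $\Hkeven[4k+2]^\perp$ your argument does not apply at all. There $Z_1$ and $Z_2$ are both orthogonal to $\Hkeven[4k+2]$, hence lie in $\Hkeven[4k+2]^\perp$, and $\innerproduct{f}{Z_2}=-\innerproduct{f}{Z_1}$ fails (take $f=Z_1$). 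What is needed is $\innerproduct{Z_1}{Z_2}=0$ in degree $4k+2$, i.e.\ \eqref{eq:Zk Wk inner product 2k+1}, which you never establish.
\item A smaller slip: the reflection making $Z_i$ odd in degree $4k+2$ is $(\omega_1,\omega_2)\mapsto(-\omega_1,-\omega_2)$. The reflection $\omega_1\mapsto-\omega_1$ alone sends $Z_1$ to $-Z_2$.
\end{itemize}

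The paper gets all three facts at once by an elementary evaluation. Apply $\ProjVst[1][-1]\ProjVst[-1][1]Z_1=\innerproduct{Z_1}{e_2}e_2$ at $\omega_0=(1,0,1,0)/\sqrt{2}\in\Vst[1][-1]$, where $Z_2(\omega_0)=1$. The left side is the mean of $Z_1$ over the unit circle of $\Vst[1][-1]$, on which $q_1=\cos 2\alpha$. Hence the overlap equals $\frac{1}{2\pi}\int_0^{2\pi}\LegendreP[j/2](\cos 2\alpha)\,d\alpha$. This is $0$ for $j\equiv 2\pmod 4$ and $\lambdak[j/4]>0$ for $j\equiv 0\pmod 4$.
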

First, we prove the following using Proposition \ref{prop:Koornwinder}.
\begin{proposition} \label{prop:koornwinder p=q=2}
	Let $\basisE = (\unitvecj[1], \unitvecj[2], \unitvecj[3], \unitvecj[4])$ be a $4 \times 4$ orthogonal matrix, and let $\planeV \coloneqq \Span\{ \unitvecj[1], \unitvecj[3] \}$. 
	Then we have the following for each $k \in \N$.
	\begin{enumerate}[label=\textup{(P\ref{prop:koornwinder p=q=2}.\roman*)}]
		\item \label{item:Koornwinder odd}
		On $\Hk[2k + 1]$, we have
		\begin{equation}
			\ProjV \ProjVperp = 0.
		\end{equation}
		\item \label{item:Koornwinder even}
		On $\Hk[2k]$, $\ProjV \ProjVperp$ is the orthogonal projection onto the one-dimensional subspace spanned by
		\begin{equation}
			\S^{3} \ni \omega \longmapsto \Ykmnab[2k][0][0][0][0](\orthogonalE^{-1} \omega) .
		\end{equation}
	\end{enumerate}
\end{proposition}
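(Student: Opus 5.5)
First reduce to the standard frame. By \eqref{eq:ProjV changing V}, with $\orthogonalE$ the given orthogonal matrix, we have $\orthogonalE \planeV_0 = \planeV$ and $\orthogonalE \planeV_0^\perp = \planeVperp$. Hence
\begin{equation}
\ProjV \ProjVperp f = \bigl( \ProjV[\planeV_0] \ProjV[\planeV_0^\perp] (f \circ \orthogonalE) \bigr) \circ \orthogonalE^{-1} .
\end{equation}
Composition with $\orthogonalE$ is a unitary map of $\Hk[j](\S^3)$ onto itself. So it suffices to prove both statements for $\planeV_0 = \Span\{(1,0,0,0),(0,0,1,0)\}$. In that case the claimed spanning function in \ref{item:Koornwinder even} is simply $\Ykmnab[2k][0][0][0][0]$.

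Next, compute $\ProjV[\planeV_0]\ProjV[\planeV_0^\perp]$ on the Koornwinder basis of Proposition \ref{prop:Koornwinder}. By \eqref{eq:ProjV polar} and \eqref{eq:ProjVperp polar}, the operator $\ProjV[\planeV_0]$ averages over $\varphi$ and $\ProjV[\planeV_0^\perp]$ averages over $\psi$. By \eqref{eq:ProjV ProjVperp formula}, their product averages over both angles.

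Each basis function $\Ykmnab$ has the form of a function of $\theta$ times $\cos(m\varphi - a\pi/2)\cos(n\psi - b\pi/2)$. The $\varphi$-average of $\cos(m\varphi - a\pi/2)$ vanishes unless $m=0$. When $m=0$ we have $a=0$ and the factor is the constant $1$. The same holds for $n$ and $\psi$. Therefore $\ProjV[\planeV_0]\ProjV[\planeV_0^\perp]$ sends every basis element with $(m,n)\neq(0,0)$ to $0$ and fixes every basis element with $m=n=0$. Since the basis is orthonormal, $\ProjV[\planeV_0]\ProjV[\planeV_0^\perp]$ restricted to $\Hk[j]$ is the orthogonal projection onto the span of the basis elements with $m=n=0$.

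Finally, use the parity constraint $(j-m-n)/2\in\N$ from Proposition \ref{prop:Koornwinder}.
\begin{itemize}
\item If $j=2k+1$ is odd, then $m=n=0$ is impossible. So there is no such basis element, which gives \ref{item:Koornwinder odd}.
\item If $j=2k$, exactly one basis element has $m=n=0$, namely $\Ykmnab[2k][0][0][0][0]$. This gives \ref{item:Koornwinder even} after transporting back by $\orthogonalE$.
\end{itemize}

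The main point needing care is justifying that the product of the averaging operators acts termwise on the basis expansion. This holds because each $\Hk[j]$ is finite-dimensional and invariant under both projections, by \eqref{eq:commute ProjV ProjHk}. The separation of variables in \eqref{eq:Koornwinder functions} then makes the computation immediate.
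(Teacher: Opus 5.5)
Your proof is correct and follows the paper's argument essentially verbatim. You reduce to $\planeV_0$ via \eqref{eq:ProjV changing V}, note from \eqref{eq:Koornwinder functions} that the $\varphi$- and $\psi$-averages annihilate every Koornwinder basis element with $(m,n)\neq(0,0)$ and fix those with $m=n=0$, and then use $(j-m-n)/2\in\N$ together with $m=0\Rightarrow a=0$, $n=0\Rightarrow b=0$ to get no such element in odd degree and exactly $\Ykmnab[2k][0][0][0][0]$ in even degree; your explicit transport back by the unitary $f\mapsto f\circ\orthogonalE$ just spells out the paper's ``without loss of generality'' step.
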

\begin{proof}[Proof of Proposition \ref{prop:koornwinder p=q=2}]
Without loss of generality, we assume that $\basisE$ is the identity matrix thanks to \eqref{eq:ProjV changing V}.
Then substituting \eqref{eq:Koornwinder functions} into \eqref{eq:ProjV polar} and \eqref{eq:ProjVperp polar} yields
\begin{align}
	\ProjV[\planeV_0] \Ykmnab
	 = 
	\begin{cases}
	\Ykmnab[k][0][n] , & m = 0 , \\
	0 , & \text{otherwise} , 
	\end{cases}  \\
	\ProjV[\planeV_0^\perp] \Ykmnab
	= 
	\begin{cases}
	\Ykmnab[k][m][0] , & n = 0 , \\
	0 , & \text{otherwise} , 
\end{cases} 
\shortintertext{so that}
	\ProjV[\planeV_0] \ProjV[\planeV_0^\perp] \Ykmnab
	= 
		\begin{cases}
		\Ykmnab[k][0][0] , & m = n = 0 , \\
		0 , & \text{otherwise} 
	\end{cases} 
\end{align}
for each basis function $\Ykmnab \in \Hk$. 
On the other hand, $m = n = 0$ is allowed if and only if $k$ is even and $a = b = 0$ according to \eqref{eq:Koornwinder basis}. This proves \ref{item:Koornwinder odd} and \ref{item:Koornwinder even}.
\end{proof}
\begin{proof}[Proof of Theorem \ref{thm:Projmean ProjHk upperbound}]
Observe that $V_{\sigma, \tau}^\perp = V_{- \sigma, - \tau}$ holds for each $(\sigma, \tau) \in \{ -1, 1 \}^2$. Therefore, we get
\begin{equation}
\ProjV[1, 1] \ProjV[-1, -1] = \ProjV[-1, -1] \ProjV[1, 1] , \quad 
\ProjV[1, -1] \ProjV[-1, 1] = \ProjV[-1, 1] \ProjV[1, -1]
\end{equation}
thanks to \eqref{eq:commute ProjV ProjVperp}.
This allows us to apply Proposition \ref{prop:functional calculus} to $\Projmean$.

On $\Hk[2k+1]$, we have
\begin{equation}
\ProjV[1, 1] \ProjV[-1, -1] = \ProjV[1, -1] \ProjV[-1, 1] = 0
\end{equation}
due to \ref{item:Koornwinder odd}, so that \eqref{eq:Projmean ProjHk upperbound 2k+1} follows from \ref{item:functional calculus bound 2}. 

In order to find the other bounds, we define the orthogonal matrix $\orthogonalEst$ by
\begin{equation} \label{eq:orthogonalEst}
	\orthogonalEst
	=\frac{1}{\sqrt{2}}
	\begin{pmatrix}
		1&1&0&0\\
		0&0&1&1\\
		\sigma&-\sigma&0&0\\
		0&0&\tau&-\tau
	\end{pmatrix} , 
\end{equation}
so that $\Vst = \orthogonalEst \planeV_0$.
Next we introduce the functions $\Zk, \Wk \in \Hk[2k](\S^3)$ given by
\begin{align}
	\Zk(\omega) &\coloneqq \Ykmnab[2k][0][0][0][0]( \orthogonalEst[1][1]^{-1} \omega ) \\
	&= \normalizeN[2k][0][0] \LegendreP[k] (\sin{2\psi}\cos^2{\theta} + \sin{2\varphi} \sin^2{\theta}) ,
	\label{eq:Zk} \\ 
	\Wk(\omega) &\coloneqq \Ykmnab[2k][0][0][0][0]( \orthogonalEst[1][-1]^{-1} \omega ) \\
	& = \normalizeN[2k][0][0] \LegendreP[k] (\sin{2\psi} \cos^2{\theta} - \sin{2\varphi} \sin^2{\theta}) .
	\label{eq:Wk}
\end{align}
Then \ref{item:Koornwinder even} states that $\ProjVst[1][1] \ProjVst[-1][-1]$ and $\ProjVst[1][-1] \ProjVst[-1][1]$ are the orthogonal projections from $\Hk[2k]$ onto the one-dimensional subspaces $\Span\{ \Zk \}$ and $\Span\{ \Wk \}$, in other words, 
\begin{align}
\ProjVst[1][1] \ProjVst[-1][-1] f(\omega) &=  \innerproduct{f}{\Zk}_{L^2(\S^3)} \Zk(\omega) ,
	\label{eq:Projection Zk} \\ 
\ProjVst[1][-1] \ProjVst[-1][1] f(\omega) &= \innerproduct{f}{\Wk}_{L^2(\S^3)} \Wk(\omega) 
	\label{eq:Projection Wk}
\end{align}
for every $f \in \Hk[2k]$ and $\omega \in \S^3$.
Now we claim the following:
\begin{gather}
	\Projeven \Zk[2k+1] = \Projeven \Wk[2k+1] = 0,
		\label{eq:Zk Wk Projeven 2k+1} \\
	\Projeven \Zk[2k] = \Projeven \Wk[2k] = \tfrac{1}{2} (\Zk[2k] + \Wk[2k]) ,
	\label{eq:Zk Wk Projeven 2k} \\
	\innerproduct{\Zk[2k+1]}{\Wk[2k+1]}_{L^2(\S^3)} = 0,
	\label{eq:Zk Wk inner product 2k+1} \\
	\innerproduct{\Zk[2k]}{\Wk[2k]}_{L^2(\S^3)}
	=\lambdak  .
	\label{eq:Zk Wk inner product 2k}
\end{gather}
For the first and second ones, we use the parity identity \cite[8.961.1]{GR2014}
\begin{equation} \label{eq:parity identity}
\LegendreP(-x) = (-1)^k \LegendreP(x) .
\end{equation}
Then we obtain
\begin{align}
\Projeven \Zk (\omega) 
\underset{\eqref{eq:Projeven polar}}&{=} 
\frac{1}{4} \normalizeN[2k][0][0] \sum_{\varepsilon \in \{-1, 1\}^2}
\JacobiP[k][0][0](\varepsilon_1 \sin{2\psi} \cos^2{\theta} + \varepsilon_2 \sin{2\varphi} \sin^2{\theta}) \\
\underset{\eqref{eq:parity identity}}&=
\frac{1}{4} ( \Zk(\omega) + \Wk(\omega) + (-1)^k \Wk(\omega) + (-1)^k \Zk(\omega) )\\
&=\frac{1 + (-1)^k}{4} (\Zk(\omega) + \Wk(\omega) ) , 
\shortintertext{and similarly}
\Projeven \Wk (\omega) &= \frac{1 + (-1)^k}{4} (\Zk(\omega) + \Wk(\omega) ) .
\end{align}

In order to compute the inner products, 
we set $f = \Zk$ and 
\begin{equation}
	\omega = \omega_0 \coloneqq \frac{1}{\sqrt{2}} 
	\begin{pmatrix}
	1 \\
	0 \\
	1 \\
	0
	\end{pmatrix}
	\in \S^3 \cap \Vst[1][-1]
\end{equation}
 in \eqref{eq:Projection Wk}. 
Then we get
\begin{equation}
\ProjVst[1][-1] \ProjVst[-1][1] \Zk(\omega_0) = \normalizeN[2k][0][0] \innerproduct{\Zk}{\Wk}_{L^2(\S^3)} 
\end{equation}
since $\LegendreP(1) = 1$.
The left-hand side can be calculated via the formulas \eqref{eq:ProjV changing V}, \eqref{eq:ProjV polar}, and \eqref{eq:ProjVperp polar}.
For simplicity, we write
\begin{align}
\Zktilde(\omega) 
&\coloneqq \Zk(\orthogonalEst[1][-1] \omega) \\
&= \normalizeN[2k][0][0] \LegendreP( \cos{2\psi} \cos^2{\theta} - \cos{2\varphi} \sin^2{\theta} ) .
\label{eq:Zktilde}
\end{align}
Then we have
\begin{equation}
\omega_0 = \orthogonalEst[1][-1] \omega(0, 0, 0) ,
\end{equation}
so that
\begin{align}
	\ProjVst[1][-1] \ProjVst[-1][1] \Zk(\omega_0)
	\underset{\eqref{eq:ProjV changing V}}&= \ProjV[\planeV_0] \ProjV[\planeV_0^\perp]
	\Zktilde(0, 0, 0) \\
	\underset{\eqref{eq:ProjV polar}, \eqref{eq:ProjVperp polar}}&{=} \frac{1}{4\pi^2} \int_{\psi \in \intervalco{0}{2\pi}} \int_{\varphi \in \intervalco{0}{2\pi}}
	\Zktilde(\varphi, \psi, 0) \, d\varphi \, d\psi \\
	\underset{\eqref{eq:Zktilde}}&= \frac{\normalizeN[2k][0][0]}{2\pi} \int_{\psi \in \intervalco{0}{2\pi}} \LegendreP ( \cos{2\psi} ) \, d\psi .
\end{align}
This integral vanishes when $k$ is odd by \eqref{eq:parity identity}. For even $k$, \cite[7.221.3]{GR2014} gives
\begin{equation} \label{eq:RHS}
\frac{1}{2\pi} \int_{\psi \in \intervalco{0}{2\pi}} \LegendreP[k] ( \cos{2\psi} ) \, d\psi = \lambdak[k/2] .
\end{equation}
This proves \eqref{eq:Zk Wk inner product 2k+1} and \eqref{eq:Zk Wk inner product 2k}.

Now we show the bounds \eqref{eq:Projmean ProjHk upperbound 4k+2 even}, \eqref{eq:Projmean ProjHk upperbound 4k+2 evenperp}, \eqref{eq:Projmean ProjHk upperbound 4k even}, and \eqref{eq:Projmean ProjHk upperbound 4k evenperp}.
\begin{proof}[Proof of \eqref{eq:Projmean ProjHk upperbound 4k+2 even}]
The identity \eqref{eq:Zk Wk Projeven 2k+1} shows that
\begin{align}
	\innerproduct{f}{\Zk[2k+1]}_{L^2(\S^3)}
	= \innerproduct{f}{\Projeven\Zk[2k+1]}_{L^2(\S^3)}
	= 0 , \\
	\innerproduct{f}{\Wk[2k+1]}_{L^2(\S^3)}
= \innerproduct{f}{\Projeven \Wk[2k+1]}_{L^2(\S^3)}
= 0 
\end{align}
for every $f \in \Hkeven[4k+2]$.
Thus, we have 
\begin{equation}
	\ProjV[1, 1] \ProjV[-1, -1] = \ProjV[1, -1] \ProjV[-1, 1] = 0
\end{equation}
on $\Hkeven[4k+2]$ by \eqref{eq:Projection Zk} and \eqref{eq:Projection Wk}, so that \ref{item:functional calculus bound 2} gives the bound $1/2$.
\end{proof}
\begin{proof}[Proof of \eqref{eq:Projmean ProjHk upperbound 4k+2 evenperp}]
The identity \eqref{eq:Zk Wk inner product 2k+1} shows that
\begin{equation}
	\innerproduct{\ProjV[1, 1] \ProjV[-1, -1] f}{\ProjV[1, -1] \ProjV[-1, 1] f} = 0
\end{equation}
for every $f\in\Hk[4k+2]$, so that \ref{item:functional calculus bound 3} gives the bound $3/4$.
\end{proof}
\begin{proof}[Proof of \eqref{eq:Projmean ProjHk upperbound 4k even}]
The identities \eqref{eq:Projection Zk}, \eqref{eq:Projection Wk}, and \eqref{eq:Zk Wk inner product 2k} show that
\begin{equation}\label{eq:pair product norm}
	\norm{\ProjV[1, 1] \ProjV[-1, -1]  \ProjV[1, -1] \ProjV[-1, 1]}_{\Hk[4k] \to \Hk[4k]}
	\underset{\eqref{eq:Projection Zk}, \eqref{eq:Projection Wk}}{=} 
	 \innerproduct{\Zk[2k]}{\Wk[2k]}_{L^2(\S^3)} 
	\underset{\eqref{eq:Zk Wk inner product 2k}}{=} \lambdak ,
\end{equation}
so that \ref{item:functional calculus bound 3+} gives the bound $(3 + \lambdak)/4$.
\end{proof}
\begin{proof}[Proof of \eqref{eq:Projmean ProjHk upperbound 4k evenperp}]
The identity \eqref{eq:Zk Wk Projeven 2k} shows that
\begin{equation}
\innerproduct{f}{\Wk[2k]} \underset{\eqref{eq:Zk Wk Projeven 2k}}= 2 \innerproduct{f}{ \Projeven \Zk[2k] } - \innerproduct{f}{ \Zk[2k] } = - \innerproduct{f}{ \Zk[2k] }
\end{equation}
for every $f \in \Hkeven[4k]^\perp$.
Thus, we have 
\begin{equation}
\Re \innerproduct{\ProjV[1, 1] \ProjV[-1, -1] f}{\ProjV[1, -1] \ProjV[-1, 1] f}
	= - \lambdak \Abs{\innerproduct{f}{\Zk[2k]}}^2
	\leq 0 
\end{equation}
on $\Hkeven[4k]^\perp$ by \eqref{eq:Projection Zk} and \eqref{eq:Projection Wk}, so that \ref{item:functional calculus bound 3} gives the bound $3/4$.
\end{proof}
Finally, we prove \eqref{eq:481 norm upper bound}.
Observe that the spherical harmonic decomposition \eqref{eq:spherical harmonic decomposition on R^d} and the commutation properties \eqref{eq:commute ProjV ProjHk}, \eqref{eq:commute Projeven Projmean} imply
\begin{align} 
	&\quad \norm{\Projmean(1 - \Projrad)}_{L^2(\R^4)\to L^2(\R^4)} \\
	\underset{\eqref{eq:spherical harmonic decomposition on R^d}, \eqref{eq:commute ProjV ProjHk}}&= \sup_{k \geq 1} { \norm{\Projmean}_{\Hk(\S^{3}) \to \Hk(\S^3)} } \\
	\underset{\eqref{eq:commute Projeven Projmean}}&= \sup_{k \geq 1} \begin{aligned}[t]
		\max \{&
		\norm{\Projmean}_{\Hk[2k-1] \to \Hk[2k-1]}, \\
		&\norm{\Projmean}_{\Hkeven[2k] \to \Hkeven[2k]}, \,
		\norm{\Projmean}_{\Hkeven[2k]^\perp \to \Hkeven[2k]^\perp}
		\} .
		\end{aligned}
	\label{eq:481 norm harmonic decomposition}
\end{align}
Therefore, the bounds above give us
\begin{equation}
\norm{\Projmean(1 - \Projrad)}_{L^2(\R^4) \to L^2(\R^4)} \leq \frac{1}{4} (3 + \sup_{k \geq 1} \lambdak ) .
\end{equation}
Here, the sequence $\{ \lambdak \}_{k \in \N}$ is decreasing because
\begin{equation}
\frac{ \lambdak[k+1] }{ \lambdak } = \mleft( \frac{2k+1}{2k+2} \mright)^2 < 1 , 
\end{equation}
so that 
\begin{equation}
\sup_{k \geq 1} \lambdak = \lambdak[1] = \frac{1}{4} .
\end{equation}
This completes the proof.
\end{proof}

\section{Proof of Conjecture \ref{conj:481} with the optimal constant}
\label{section:optimal constant}
Our aim in this section is to give the optimal constant as follows. 
\begin{theorem} \label{thm:exact value}
We have
\begin{equation}
\norm{\Projmean (1-\Projrad)}_{\Leven(\R^4) \to \Leven(\R^4)} = \norm{\Projmean (1-\Projrad)}_{L^2(\R^4) \to L^2(\R^4)} = \frac{15 + \sqrt{97}}{32} .
\end{equation}
As a consequence, Conjecture \ref{conj:481} holds with the constant
\begin{equation} 
	\beta = \frac{17 - \sqrt{97}}{32} ,
\end{equation}
which is the largest possible.
\end{theorem}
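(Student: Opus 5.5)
The plan is to reuse the decomposition \eqref{eq:481 norm harmonic decomposition} from the proof of Theorem \ref{thm:Projmean ProjHk upperbound}, but to sharpen it on the finitely many degrees where the bound $(3+\lambdak)/4$ is not yet good enough. Note first that
\begin{equation}
\frac{15+\sqrt{97}}{32} \approx 0.7765 > \frac34 .
\end{equation}
Hence the estimates \eqref{eq:Projmean ProjHk upperbound 2k+1}, \eqref{eq:Projmean ProjHk upperbound 4k+2 even}, \eqref{eq:Projmean ProjHk upperbound 4k+2 evenperp} and \eqref{eq:Projmean ProjHk upperbound 4k evenperp} already dispose of all odd degrees, of degrees $4k+2$, and of the non-even parts of degrees $4k$. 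Only the blocks $\Hkeven[4k](\S^3)$ with $k\ge1$ remain.

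On $\Hkeven[4k]$ we have the bound $(3+\lambdak)/4$ from \eqref{eq:Projmean ProjHk upperbound 4k even}, with $\lambdak$ decreasing. Since
\begin{equation}
\lambdak[3] = (20/64)^2 \approx 0.0977 \quad\text{and}\quad \tfrac14(3+\lambdak[3]) \approx 0.7744 < 0.7765 ,
\end{equation}
all blocks with $k\ge3$ are harmless. The cases $k=1$ (degree $4$) and $k=2$ (degree $8$) must be computed exactly. For these I would work in the finite-dimensional space $\Hkeven[4k](\S^3)$.

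For degree $4$, the coordinate-wise even quartic harmonics form a space of dimension $10-4=6$. A convenient basis consists of the even Koornwinder functions $\Ykmnab$ with $m,n$ even and $a=b=0$, transformed by $\Projeven$, or alternatively explicit polynomials in $\omega_i^2$.

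By \eqref{eq:commute Projeven Projmean}, on this space $\Projmean=\Projeven\ProjVst[1][1]\Projeven$, so it suffices to compute the Gram-type matrix $\innerproduct{\ProjVst[1][1] f_i}{f_j}$. Each entry can be evaluated using \eqref{eq:ProjV changing V} with $\orthogonalEst[1][1]$ together with \eqref{eq:ProjV polar}, which amounts to averaging the rotated polynomial over $\varphi$.

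Symmetries will cut the matrix down further. These are the permutation swapping $(\omega_1,\omega_3)\leftrightarrow(\omega_2,\omega_4)$ and the swap $\omega_1\leftrightarrow\omega_3$ together with $\omega_2\leftrightarrow\omega_4$; both preserve $\Vst[1][1]$ and commute with $\Projeven$. Using them, I expect the $6\times6$ matrix to split into small blocks. The top block should have characteristic polynomial
\begin{equation}
16\mu^2-15\mu+2 ,
\end{equation}
whose largest root is $(15+\sqrt{97})/32$.

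A useful consistency check comes from the rank-one structure $\Projeven(\mathcal{A}+\mathcal{B})\Projeven=2\,u\otimes u$, where $u=\tfrac12(\Zk[2]+\Wk[2])$ and $\mathcal{A},\mathcal{B}$ denote the projections \eqref{eq:Projection Zk} and \eqref{eq:Projection Wk}. This follows from $\innerproduct{f}{\Zk[2]}=\innerproduct{f}{\Wk[2]}=\innerproduct{f}{u}$ for even $f$.

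Degree $8$ is done in the same way, with a larger but still finite matrix (dimension $35-15=20$ before symmetry reduction). Here it suffices to show that its top eigenvalue is at most $(15+\sqrt{97})/32$, and I anticipate it is strictly smaller. I expect this degree-$8$ computation to be the main obstacle. I would first try to reduce it to a small invariant subspace containing $u$, namely the span of $u$ and its images under the projections $\ProjVst$, since the excess of $\Projmean$ above $3/4$ should be supported near $\Span\{\Zk[4],\Wk[4]\}$. If that fails, I would compute the full symmetry-reduced matrix explicitly.

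Finally, the maximizing eigenvector in degree $4$ lies in $\Leven$. Tensoring it with any nonzero radial profile in $L^2((0,\infty),r^3\,dr)$ gives the lower bound for $\norm{\Projmean(1-\Projrad)}_{\Leven\to\Leven}$. The $L^2\to L^2$ norm is at least the $\Leven$ norm trivially and at most $(15+\sqrt{97})/32$ by the blockwise bounds above, so the two norms coincide. Then $\beta=1-\frac{15+\sqrt{97}}{32}=\frac{17-\sqrt{97}}{32}$ is optimal by Proposition \ref{prop:Projmean identity}.
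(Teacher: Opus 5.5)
Your plan follows the paper's proof: the blockwise bounds of Theorem \ref{thm:Projmean ProjHk upperbound} together with $\frac14(3+\lambdak[3])=\frac{793}{1024}<\frac{15+\sqrt{97}}{32}$ leave only $\Hkeven[4](\S^3)$ and $\Hkeven[8](\S^3)$. The paper treats these exactly as you propose, via explicit representation matrices in the basis $\Ykmnab[2k][2m][2n][0][0]$; Proposition \ref{prop:norm k=8} even gives the stronger bound $\le 3/4$ in degree $8$.

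Two small corrections. First, $\dim \Hkeven[8](\S^3)=35-20=15$, not $20$. Second, the paper's reduction to $3\times 3$ and $6\times 6$ matrices (Proposition \ref{prop:vanishing}) uses the separate swaps $\omega_1\leftrightarrow\omega_3$ and $\omega_2\leftrightarrow\omega_4$. Each fixes $\Vst[1][1]$ pointwise, so together they kill every $\Ykmnab[4k][2m][2n][0][0]$ with $m$ or $n$ odd. Your combined swap only sees the parity of $m+n$, so for instance it misses that the $(m,n)=(1,1)$ direction lies in the kernel in degree $4$.
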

In order to establish Theorem \ref{thm:exact value}, it suffices to prove the following results.
\begin{proposition} \label{prop:norm k=4}
	We have
	\begin{equation} \label{eq:norm k=4}
		\norm{\Projmean}_{\Hkeven[4](\S^3) \to \Hkeven[4](\S^3)} 
		= \frac{15 + \sqrt{97}}{32} > \frac{3}{4}.
	\end{equation}
	Moreover, the subspace of $\Hkeven[4]$ consisting of all functions satisfying
	\begin{equation}
	\norm{ \Projmean f }_{L^2(\S^3)} = \frac{15 + \sqrt{97}}{32} \norm{ f }_{L^2(\S^3)}
	\end{equation}
	coincides with the one-dimensional space spanned by
	\begin{align}
	f(\omega) 
	&= 9 \sqrt{2} \Ykmnab[4][0][0][0][0](\omega) - (4 + \sqrt{97}) ( \Ykmnab[4][4][0][0][0](\omega) + \Ykmnab[4][0][4][0][0](\omega) ) .
	\end{align}
\end{proposition}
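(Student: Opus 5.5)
The plan is to reduce the statement to finite-dimensional linear algebra on $\Hkeven[4](\S^3)$. First I will identify an explicit orthonormal basis of this space from Proposition \ref{prop:Koornwinder}. By \eqref{eq:Projeven polar}, $\Projeven$ sends $\varphi \mapsto -\varphi$, $\varphi\mapsto \varphi+\pi$, and the same for $\psi$. The factor $\cos(m\varphi - a\pi/2)$ is invariant under both maps iff $a=0$ and $m$ is even, and likewise $b=0$, $n$ even for the $\psi$-factor. Hence $\Projeven$ maps each $\Ykmnab[4]$ to itself or to $0$, and $\Hkeven[4]$ is the $6$-dimensional span of
\begin{equation}
\Ykmnab[4][0][0][0][0],\ \Ykmnab[4][2][0][0][0],\ \Ykmnab[4][0][2][0][0],\ \Ykmnab[4][2][2][0][0],\ \Ykmnab[4][4][0][0][0],\ \Ykmnab[4][0][4][0][0].
\end{equation}
By \eqref{eq:commute Projeven Projmean}, on $\Hkeven[4]$ the operator $\Projmean$ coincides with the self-adjoint, positive operator $\Projeven \ProjVst[1][1] \Projeven$. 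Its matrix in this basis is therefore the Gram matrix $G_{ij} = \innerproduct{\ProjVst[1][1] Y_i}{\ProjVst[1][1] Y_j}_{L^2(\S^3)}$, and the norm is its largest eigenvalue.

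To compute $G$, I will write each $Y_i$ as an explicit degree-$4$ polynomial in $\omega$, with the normalisations \eqref{eq:normalizeN}. I then use \eqref{eq:ProjV changing V} with $\orthogonalEst[1][1]$ to express $\ProjVst[1][1] Y_i(\orthogonalEst[1][1]\omega) = \ProjV[\planeV_0][Y_i\circ\orthogonalEst[1][1]](\omega)$. The average over $\varphi$ in \eqref{eq:ProjV polar} keeps only the $m=0$ Koornwinder components of $Y_i\circ \orthogonalEst[1][1]$. Concretely, I expand $Y_i\circ\orthogonalEst[1][1]$ in the Koornwinder basis of $\Hk[4]$, which is a finite polynomial computation, and read off its coefficients on the five basis elements with $m=0$, namely $(m,n)\in\{(0,0),(0,2),(0,4)\}$ with the allowed $b$. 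Writing $c_i$ for this coefficient vector, I get $G_{ij} = \innerproduct{c_i}{c_j}$. An alternative route to the same entries is $G = \frac14\sum_{\sigma,\tau}\innerproduct{\ProjVst Y_i}{Y_j}$, but the first route needs only one change of variables.

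Before diagonalising, I will exploit symmetry. The coordinate swap $(x_1,x_2,x_3,x_4)\mapsto(x_2,x_1,x_4,x_3)$ permutes the planes $\Vst$, so it commutes with $\Projmean$. Up to signs it exchanges $m\leftrightarrow n$ (i.e.\ $\varphi \leftrightarrow \psi$, $\theta\mapsto\pi/2-\theta$). The $6\times 6$ matrix then splits into a symmetric block, spanned by $Y^{(0,0)}$, $Y^{(2,0)}+Y^{(0,2)}$, $Y^{(2,2)}$ and $Y^{(4,0)}+Y^{(0,4)}$, and an antisymmetric block spanned by the two differences; signs from the Jacobi factor $(-1)^j$ under $\theta\mapsto\pi/2-\theta$ have to be tracked. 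Further structure should come from the fact that the $\mathbb{Z}_4$-type rotations preserving $\Leven$ and the family $\{\Vst\}$ act diagonally.

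I expect the symmetric block to reduce further, so that the top eigenvalue comes from a $2\times2$ block in $\Ykmnab[4][0][0][0][0]$ and $\Ykmnab[4][4][0][0][0]+\Ykmnab[4][0][4][0][0]$. Its characteristic polynomial should have roots $(15\pm\sqrt{97})/32$, with top eigenvector proportional to the stated $f$. I will then check that every remaining eigenvalue is strictly less than $(15+\sqrt{97})/32$. The bound $3/4 < (15+\sqrt{97})/32$ holds since $\sqrt{97}>9$. Some of the remaining eigenvalues can probably be bounded directly by $3/4$ via \ref{item:functional calculus bound 3}, exactly as in the proof of \eqref{eq:Projmean ProjHk upperbound 4k evenperp}. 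This also gives the one-dimensionality of the extremal subspace. The main obstacle is the error-prone computation of the coefficients $c_i$ with the correct normalising constants. To guard against errors I will use the checks $\sum_j \abs{c_i}^2 \le 1$, $\operatorname{tr} G$, and the already known fact from \eqref{eq:Projection Zk} and \eqref{eq:Zk Wk Projeven 2k} that $\Projeven\Zk[2]$ is an eigenvector-related vector, which constrains $G$.
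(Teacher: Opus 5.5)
Your plan is correct and is essentially the paper's proof. The paper also computes the matrix of $\Projmean$ in the six even Koornwinder functions of $\Hkeven[4]$ and diagonalises it, but it first proves (Proposition~\ref{prop:vanishing}) that $\ProjVst[1][1]$ annihilates $\Ykmnab[4][2][0][0][0]$, $\Ykmnab[4][0][2][0][0]$ and $\Ykmnab[4][2][2][0][0]$, so your vectors $c_i$ for these three will come out zero; that leaves only the $3\times 3$ block on $\Ykmnab[4][0][0][0][0]$, $\Ykmnab[4][4][0][0][0]$, $\Ykmnab[4][0][4][0][0]$. Your swap symmetry then splits that block exactly as you predict, into a $2\times 2$ block with eigenvalues $(15\pm\sqrt{97})/32$ and the eigenvalue $1/2$ on $\Ykmnab[4][4][0][0][0]-\Ykmnab[4][0][4][0][0]$; note that the swap sends $\Ykmnab[4][2][0][0][0]$ to $-\Ykmnab[4][0][2][0][0]$, which is harmless here.
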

\begin{proposition} \label{prop:norm k=8}
	We have
	\begin{equation} \label{eq:norm k=8}
		\norm{\Projmean}_{\Hkeven[8](\S^3) \to \Hkeven[8](\S^3)} \leq 3/4.
	\end{equation}
\end{proposition}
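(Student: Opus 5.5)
The plan is to refine the argument that gave \eqref{eq:Projmean ProjHk upperbound 4k even} for $k=2$. There, \ref{item:functional calculus bound 3+} only yields $(3+\lambdak[2])/4 = (3 + 9/64)/4 > 3/4$, so we must use the evenness of $f$ more carefully. On $\Hk[8]$ write
\begin{equation}
4\Projmean = A + B + \ProjV[1,1]\ProjV[-1,-1] + \ProjV[1,-1]\ProjV[-1,1],
\end{equation}
where $A = 1-(1-\ProjV[1,1])(1-\ProjV[-1,-1])$ and $B = 1-(1-\ProjV[1,-1])(1-\ProjV[-1,1])$ are orthogonal projections, as in the proof of Proposition \ref{prop:functional calculus}. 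By \eqref{eq:Projection Zk}, \eqref{eq:Projection Wk} the last two terms are $\innerproduct{\cdot}{\Zk[4]}\Zk[4]$ and $\innerproduct{\cdot}{\Wk[4]}\Wk[4]$.

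For $f \in \Hkeven[8]$, \eqref{eq:Zk Wk Projeven 2k} gives $\innerproduct{f}{\Zk[4]} = \innerproduct{f}{\Wk[4]} = \innerproduct{f}{u}$ with $u \coloneqq (\Zk[4]+\Wk[4])/2 \in \Hkeven[8]$. Put $e \coloneqq u/\norm{u}$, where $\norm{u}^2 = (1+\lambdak[2])/2$ by \eqref{eq:Zk Wk inner product 2k}. Decompose $f = a e + g$ with $g \in \Hkeven[8]$, $g \perp e$. Then the rank-one part annihilates $g$, so $4\Projmean g = (A+B) g$. Since $\Projmean$ is self-adjoint, nonnegative and commutes with $\Projeven$ by \eqref{eq:commute Projeven Projmean}, its norm on $\Hkeven[8]$ is the supremum of the quadratic form $\innerproduct{4\Projmean f}{f}$. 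This form is bounded by the largest eigenvalue of
\begin{equation}
\begin{pmatrix} \alpha & \beta \\ \beta & 2 \end{pmatrix}, \qquad \alpha \coloneqq \innerproduct{4\Projmean e}{e}, \quad \beta \coloneqq \norm{(1 - \innerproduct{\cdot}{e}e)\, 4\Projmean e},
\end{equation}
because $\innerproduct{(A+B)g}{g} \leq 2\norm{g}^2$ and the cross term is $2\Re\bigl(\bar a\,\innerproduct{4\Projmean e}{g}\bigr)$ with $\abs{\innerproduct{4\Projmean e}{g}} \leq \beta\norm{g}$. The target $\norm{\Projmean} \leq 3/4$ then reduces to checking $\alpha \leq 3$ and $\beta^2 \leq 3 - \alpha$. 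Equivalently, using $\norm{4\Projmean e}^2 = \alpha^2 + \beta^2$, it suffices to show $\norm{4\Projmean e}^2 \leq \alpha^2 - \alpha + 3$.

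Both $\alpha$ and $\norm{4\Projmean e}$ are explicit finite computations, since everything lives in the finite-dimensional space $\Hk[8](\S^3)$. Concretely:
\begin{itemize}[label=--]
\item $\Zk[4]$ is invariant under $\ProjV[1,1]$ and $\ProjV[-1,-1]$, and $\Wk[4]$ under $\ProjV[1,-1]$ and $\ProjV[-1,1]$.
\item What remains is to compute $\ProjV[1,-1]\Zk[4]$, $\ProjV[-1,1]\Zk[4]$, and symmetrically $\ProjV[\pm1,\pm1]\Wk[4]$.
\item For this, transform via \eqref{eq:ProjV changing V} into the $\planeV_0$ frame, as was done for $\Zktilde$ in \eqref{eq:Zktilde}. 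Expand $\Zktilde[4]$, i.e. $\LegendreP[4](\cos 2\psi\cos^2\theta - \cos 2\varphi \sin^2\theta)$, in the Koornwinder basis of Proposition \ref{prop:Koornwinder}.
\item Then apply \eqref{eq:ProjV polar} and \eqref{eq:ProjVperp polar}, which simply kill the basis functions with $m \neq 0$, respectively $n \neq 0$.
\end{itemize}
The resulting norms and inner products are sums of squares of the expansion coefficients with the normalizations \eqref{eq:normalizeN}. An alternative is to evaluate $\innerproduct{\ProjV[\sigma,\tau]\Zk[4]}{\Zk[4]}$ as a double integral, using the reproducing identity trick with $\omega_0$ from the proof of \eqref{eq:Zk Wk inner product 2k}.

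The main obstacle is this explicit expansion of the degree-$8$ zonal function $\Zktilde[4]$ in the Koornwinder basis (Jacobi polynomials $\JacobiP[j][m][n]$ with $m+n \le 8$), and keeping track of the cross terms between the four projected images. I would first try to shortcut it by computing only the needed Gram quantities $\innerproduct{\ProjV[\sigma,\tau]\Zk[4]}{\Zk[4]}$, $\innerproduct{\ProjV[\sigma,\tau]\Zk[4]}{\Wk[4]}$ and $\innerproduct{\ProjV[\sigma,\tau]\Zk[4]}{\ProjV[\sigma',\tau']\Zk[4]}$ via angular integrals of $\LegendreP[4]$, since these determine $\alpha$ and $\norm{4\Projmean e}^2$.

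If the resulting numbers violate $\beta^2 \le 3-\alpha$, the fallback is sharper: replace the bound $A+B \le 2$ on $g$ by restricting further to the finite span of $e$, $\Projmean e$ and their images. In that case one diagonalizes $\Projmean$ exactly on $\Hkeven[8]$, whose dimension is small, in the same way as will be needed for Proposition \ref{prop:norm k=4}.
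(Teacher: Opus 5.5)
Your reduction is logically sound. On $\Hkeven[8]$ the two rank-one terms combine into $(1+\lambdak[2])\innerproduct{\cdot}{e}e$, and the $2\times2$ comparison is correct. The gap is that the decisive inequality $\beta^2\le 3-\alpha$ is false, so the main line does not prove the proposition.

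Running the computation you outline gives $\norm{u}^2=73/128$ and $\norm{\ProjV[1,-1]\Zk[4]}^2=\norm{\ProjV[-1,1]\Zk[4]}^2=3574/2^{14}$. The latter is the mean of $\LegendreP[4](1-2\sin^2 t\,\sin^2 s)$ over $(t,s)\in[0,2\pi)^2$, and it is also the $(1,1)$ entry of \eqref{eq:representation matrix k=8}. Hence
\begin{equation}
\alpha = 2+\frac{3574\cdot 2^{-14}+\lambdak[2]}{\norm{u}^2} = \frac{12283}{4672}\approx 2.6291 .
\end{equation}
Next, $\innerproduct{\ProjV[1,-1]\Zk[4]}{\ProjV[1,1]\Wk[4]}=108241/2^{20}$, which gives $\norm{4\Projmean e}^2 = 2181393/299008\approx 7.2954$. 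The same value follows from the coordinates $u=2^{-7}(18,-20\sqrt{2},-20\sqrt{2},35\sqrt{2},6\sqrt{70},35\sqrt{2})$ in the ordered basis used for \eqref{eq:representation matrix k=8}.

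But $\alpha^2-\alpha+3\approx 7.2829$, so $\beta^2\approx 0.3834>3-\alpha\approx 0.3709$. The top eigenvalue of your $2\times2$ matrix is $\approx 3.009$, and the route only yields $\norm{\Projmean}_{\Hkeven[8]\to\Hkeven[8]}\le 0.7523$. The true norm is $\approx 0.716$, the largest root of the cubic factor in \eqref{eq:characteristic polynomial k=8}. The loss comes from bounding the cross term by Cauchy--Schwarz and the $g$-block by $A+B\le 2$ as if both were extremal along the same direction.

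So everything rests on your fallback, and that fallback is exactly the paper's proof:
\begin{itemize}
\item discard the nine basis functions of $\Hkeven[8]$ killed by $\Projmean$ (Proposition \ref{prop:vanishing});
\item write down the $6\times 6$ matrix \eqref{eq:representation matrix k=8} on the span of $\Ykmnab[8][0][0][0][0],\dots,\Ykmnab[8][0][8][0][0]$;
\item factor its characteristic polynomial;
\item show the cubic factor has no zero on $[3/4,\infty)$ by expanding it at $3/4$, where all coefficients are positive.
\end{itemize}
Your proposal carries out none of this, so as it stands nothing is proved.

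The Krylov-type variant you mention is legitimate in principle. The $\Projmean$-cyclic subspace $K$ generated by $e$ is invariant. Its orthogonal complement in $\Hkeven[8]$ lies in $e^\perp$, where $4\Projmean=\Projeven(A+B)\Projeven\le 2$, so only the norm of $\Projmean$ on $K$ matters. The moments $\innerproduct{\Projmean^j e}{e}$ for $j\le 3$ are even consistent with $K$ being three-dimensional, with that cubic factor as characteristic polynomial. Certifying this and locating the top eigenvalue is, however, again an explicit finite computation of the same kind, which you have not done.
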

We first explain how to prove Theorem \ref{thm:exact value} using the bounds given in Theorem \ref{thm:Projmean ProjHk upperbound} and Propositions \ref{prop:norm k=4}, \ref{prop:norm k=8}.
\begin{proof}[Proof of Theorem \ref{thm:exact value}]
	Observe that the bound \eqref{eq:Projmean ProjHk upperbound 4k even} and the monotonicity of $\{\lambdak\}$ imply
	\begin{equation} \label{eq:Projmean ProjHk upperbound 4k even k>=3}
		\norm{\Projmean}_{\Hkeven[4k] \to \Hkeven[4k]}
		\leq \frac{1}{4} ( 3 + \lambdak[3] )
		= \frac{793}{1024}
		< \frac{15 + \sqrt{97}}{32} 
	\end{equation}
	for $k \geq 3$.
	Combining this with Propositions \ref{prop:norm k=4} and \ref{prop:norm k=8}, we obtain
	\begin{equation}
	\sup_{k \geq 1} {\norm{\Projmean}_{\Hkeven[4k] \to \Hkeven[4k]}} = \frac{15 + \sqrt{97}}{32} .
	\end{equation}
	Thus, the bounds in Theorem \ref{thm:Projmean ProjHk upperbound} and the spherical harmonic decomposition \eqref{eq:481 norm harmonic decomposition} give
	\begin{equation}
	\norm{\Projmean (1-\Projrad)}_{L^2(\R^4) \to L^2(\R^4)} = \frac{15 + \sqrt{97}}{32} .
	\end{equation}
	Moreover, using
	\begin{equation} 
		 \norm{\Projmean(1 - \Projrad)}_{\Leven(\R^4)\to \Leven(\R^4)} 
		= \sup_{k \geq 1} \norm{\Projmean}_{\Hkeven[2k] \to \Hkeven[2k]}
		\label{eq:481 norm harmonic decomposition even}
	\end{equation}
	instead of \eqref{eq:481 norm harmonic decomposition}, we obtain
	\begin{equation*}
	\norm{\Projmean (1-\Projrad)}_{\Leven(\R^4) \to \Leven(\R^4)} = \frac{15 + \sqrt{97}}{32} . \qedhere
	\end{equation*}
\end{proof}

First, we construct an orthonormal basis of $\Hkeven[2k]$ using Proposition \ref{prop:Koornwinder}.
\begin{proposition}\label{prop:basis}
	For each $k \in \N$, the functions
	\begin{equation}\label{eq:Hkeven basis}
		\set{ \Ykmnab[2k][2m][2n][0][0] }{ (m, n) \in \N^2, \, m + n \leq k}
	\end{equation}
	form an orthonormal basis of $\Hkeven[2k](\S^3)$. 
\end{proposition}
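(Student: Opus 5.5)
The plan is to compute how $\Projeven$ acts on the Koornwinder basis of $\Hk[2k](\S^3)$ given by Proposition \ref{prop:Koornwinder}. Since $\Projeven$ preserves $\Hk[2k]$ and is self-adjoint, $\Hkeven[2k] = \Projeven \Hk[2k]$ is spanned by the images $\Projeven \Ykmnab[2k][m][n][a][b]$. So it suffices to show three things:
\begin{enumerate}
\item every basis function is either fixed by $\Projeven$ or killed by it;
\item the fixed ones are exactly those with $m,n$ even and $a=b=0$;
\item the surviving functions are orthonormal.
\end{enumerate}
Orthonormality in (3) is automatic, because the surviving functions form a subfamily of an orthonormal basis.

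The key step is the computation with the polar formula \eqref{eq:Projeven polar}. The factor $(\sin\theta)^m(\cos\theta)^n \JacobiP[(2k-m-n)/2][m][n](\cos 2\theta)$ does not depend on $\varphi$ or $\psi$. Hence $\Projeven$ acts on $\Ykmnab[2k][m][n][a][b]$ only through the product $\cos(m\varphi - a\pi/2)\cos(n\psi - b\pi/2)$, and this product factorizes. So I will average each factor separately. For the $\varphi$ factor, I average $\cos(m\varphi - a\pi/2)$ over the four maps $\varphi \mapsto \pm\varphi$ and $\varphi \mapsto \pm\varphi + \pi$.

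\begin{itemize}
\item The shift by $\pi$ multiplies the factor by $(-1)^m$. Averaging over $j_1$ therefore gives zero when $m$ is odd and returns the factor unchanged when $m$ is even.
\item The reflection $\varphi \mapsto -\varphi$ fixes $\cos(m\varphi)$, which is the case $a=0$. It negates $\sin(m\varphi)$, which is the case $a=1$ with $m \ge 1$. Averaging over $\varepsilon_1$ therefore kills the $a=1$ functions.
\end{itemize}

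The $\psi$ factor is handled the same way with $(n,b)$. Hence $\Projeven \Ykmnab[2k][m][n][a][b]$ equals $\Ykmnab[2k][m][n][a][b]$ if $m,n$ are even and $a=b=0$, and equals $0$ otherwise.

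It then remains to check the index set. Writing $m = 2m'$ and $n = 2n'$, the admissibility condition $(2k - m - n)/2 \in \N$ becomes $m' + n' \le k$. The constraints in \eqref{eq:Koornwinder basis} that $m=0$ forces $a=0$ and $n=0$ forces $b=0$ are automatically satisfied when $a=b=0$. So the surviving indices are exactly those listed in \eqref{eq:Hkeven basis}.

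I do not expect a real obstacle. The only point that needs care is the bookkeeping in \eqref{eq:Projeven polar}: one must confirm that the coordinate reflections $x_j \mapsto -x_j$ correspond to $\varphi \mapsto -\varphi$, $\varphi \mapsto \varphi + \pi$, $\psi \mapsto -\psi$ and $\psi \mapsto \psi + \pi$, and that none of them changes $\theta$. This is already encoded in \eqref{eq:Projeven polar}, which I take as given.
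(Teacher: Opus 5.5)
Your proposal is correct and follows the paper's proof: you apply \eqref{eq:Projeven polar} to each Koornwinder basis function, observe that the averages over $j_1, j_2$ kill odd $m$ or $n$ and the averages over $\varepsilon_1, \varepsilon_2$ kill $a=1$ or $b=1$, and conclude because the surviving functions are an orthonormal subfamily. You also state explicitly that these surviving functions are fixed by $\Projeven$, and hence lie in $\Hkeven[2k]$; the paper leaves this step implicit.
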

\begin{proof}[Proof of Proposition \ref{prop:basis}]
Since the functions \eqref{eq:Hkeven basis} form an orthonormal system in $\Hk[2k]$ due to Proposition \ref{prop:Koornwinder}, it suffices to show that $\Hkeven[2k] = \Projeven \Hk[2k]$ is spanned by them.
For each basis function $\Ykmnab[2k]$ of $\Hk[2k]$, we have
\begin{align}
&\quad \Projeven \Ykmnab[2k](\omega) \\
\underset{\eqref{eq:Projeven polar}}&{=} 
\frac{1}{16} \sum_{ \varepsilon \in \{ -1, 1 \}^2 } \sum_{ j \in \{0, 1\}^2 } \Ykmnab[2k]( \varepsilon_1 \varphi + j_1 \pi , \varepsilon_2 \psi + j_2 \pi , \theta ) \\
\underset{\eqref{eq:Koornwinder functions}}&= 
\frac{1}{16} \normalizeN[2k] (\sin\theta)^m (\cos\theta)^n \JacobiP[k-(m+n)/2][m][n](\cos{2\theta}) \\
&\quad \times \sum_{ \varepsilon \in \{ -1, 1 \}^2 } \sum_{ j \in \{0, 1\}^2 } \cos(m (\varepsilon_1 \varphi + j_1 \pi) - a \pi / 2 ) \cos(n (\varepsilon_2 \psi + j_2 \pi) - b \pi / 2 ) .
\end{align}
Now observe that the sum above vanishes if at least one of $a$ and $b$ equals $1$, or at least one of $m$ and $n$ is odd.
Indeed, 
\begin{itemize}
	\item the sum with respect to $\varepsilon_1$ vanishes when $a = 1$,  
	\item the sum with respect to $\varepsilon_2$ vanishes when $b = 1$, 
	\item the sum with respect to $j_1$ vanishes when $m$ is odd,  
	\item the sum with respect to $j_2$ vanishes when $n$ is odd.
\end{itemize}
Hence, $\Hkeven[2k]$ is spanned by the functions \eqref{eq:Hkeven basis}.
\end{proof}
Now we can prove Propositions \ref{prop:norm k=4} and \ref{prop:norm k=8} by calculating the representation matrices of $\Projmean$ on $\Hkeven[4]$ and $\Hkeven[8]$ with respect to the basis \eqref{eq:Hkeven basis} and finding their maximal eigenvalues.
Here, the matrices are $6 \times 6$ on $\Hkeven[4]$ and $15 \times 15$ on $\Hkeven[8]$. 
Nevertheless, we can reduce to $3 \times 3$ and $6 \times 6$ using the following result.
\begin{proposition}\label{prop:vanishing}
Let $(k, m, n) \in \N^3$ be such that $m+n \leq 2k$.
If at least one of $m$ and $n$ is odd, then we have
	\begin{equation} \label{eq:finite range span} \noeqref{eq:finite range span}
		\Projmean \Ykmnab[4k][2m][2n][0][0] = 0 .
	\end{equation}
\end{proposition}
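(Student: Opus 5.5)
The natural route is to exploit symmetries of $\Projmean$ rather than compute $\Projmean \Ykmnab[4k][2m][2n][0][0]$ directly. By \eqref{eq:commute Projeven Projmean} and \eqref{eq:commute ProjHk Projmean}, $\Projmean$ maps $\Hkeven[4k]$ into itself. By Proposition \ref{prop:basis}, it therefore suffices to show
\begin{equation}
\innerproduct{\Projmean \Ykmnab[4k][2m][2n][0][0]}{\Ykmnab[4k][2m'][2n'][0][0]}_{L^2(\S^3)} = 0
\end{equation}
for all admissible $(m',n')$ whenever $m$ or $n$ is odd. By symmetry of the roles of $(\omega_2,\omega_4)$ and $(\omega_1,\omega_3)$, I would treat only the case $m$ odd.

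The first step is to introduce the orthogonal map $A(x_1,x_2,x_3,x_4) \coloneqq (x_1,-x_4,x_3,x_2)$. This is the rotation by $\pi/2$ in the $(x_2,x_4)$-plane, i.e. $\varphi \mapsto \varphi + \pi/2$ in the coordinates \eqref{eq:Koornwinder coordinates}. A direct check on the spanning vectors in \eqref{eq:plane Vst} gives $A\Vst = \Vst[\sigma][-\tau]$. By \eqref{eq:ProjV changing V}, conjugation by $f \mapsto f\circ A$ therefore permutes the four projections $\ProjVst$ and commutes with $\Projmean$. On the basis, \eqref{eq:Koornwinder functions} gives
\begin{equation}
\Ykmnab[4k][2m][2n][0][0]\circ A = (-1)^m \Ykmnab[4k][2m][2n][0][0] .
\end{equation}
Since $\Projmean$ is self-adjoint and commutes with this unitary involution, it preserves its $\pm1$ eigenspaces. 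Hence the inner product above vanishes whenever $m'$ is even, and only the block of odd $m'$ remains.

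For that remaining block, I would use $\Projmean\Projeven = \Projeven \ProjVst[1][1] \Projeven$ together with the structure of $\ProjVst[1][1]$. The aim is to show that $\ProjVst[1][1]$ annihilates the $A$-odd even part. Conjugating by $\orthogonalEst[1][1]$ via \eqref{eq:ProjV changing V} reduces $\ProjVst[1][1]$ to the $\varphi$-average \eqref{eq:ProjV polar}. The plan is to write $\Ykmnab[4k][2m][2n][0][0]\circ\orthogonalEst[1][1]$ in the new angular coordinates and observe that $A$ becomes, in these coordinates, a map that flips the sign of the relevant Fourier mode in $\varphi$. Concretely, $A$ composed with a suitable coordinate reflection $\mathcal{A}_\varepsilon$ should become a rotation inside $V_{1,1}$ or $V_{1,1}^\perp$ combined with a reflection. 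If that identification works, the $\varphi$-average of any function that is $\mathcal{A}_\varepsilon$-invariant and $A$-odd is zero, because averaging over the rotation group of $V_{1,1}$ absorbs the rotation part and the function is odd under what is left.

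I expect this last identification to be the main obstacle: the map $A$ does not normalise the single plane $V_{1,1}$, only the family $\{\Vst\}$. So I would first check by hand whether $A\circ \mathcal{A}_\varepsilon$ for $\varepsilon = (1,1,1,-1)$, which maps $V_{1,1}$ to itself, acts on $V_{1,1}$ and on $V_{1,1}^\perp$ as a rotation by $\pi/2$. If it does, the $A$-odd part is killed by the averaging over $V_{1,1}$-rotations. If not, the fallback is to compute $\ProjVst[1][1]\Ykmnab[4k][2m][2n][0][0]$ explicitly from \eqref{eq:Zk}-type formulas. That would be done via the expansion of $\Ykmnab[4k][2m][2n][0][0]\circ\orthogonalEst[1][1]$ in the Koornwinder basis, checking that only components with odd total $\varphi$-frequency survive, which are then annihilated by $\Projeven$.
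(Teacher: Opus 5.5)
The key step of your proposal is not established, and the property you plan to check is the wrong one. The map you single out, $A\circ A_\varepsilon$ with $\varepsilon=(1,1,1,-1)$, is exactly the swap $S\colon(x_1,x_2,x_3,x_4)\mapsto(x_1,x_4,x_3,x_2)$. It has determinant $-1$, so it cannot act as a rotation on both $\Vst[1][1]$ and $\Vst[1][1]^\perp$. In fact it fixes $\Vst[1][1]$ pointwise and is a reflection of $\Vst[1][1]^\perp=\Vst[-1][-1]$. So your check comes out negative, and your plan then falls back on an unspecified explicit computation. That fallback is not coherent as described: a $\varphi$-average keeps only the zero $\varphi$-frequency, so "only odd $\varphi$-frequencies survive" does not make sense.

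The mechanism you describe would also fail even if the check succeeded. If $g$ preserves $\Vst[1][1]$ but rotates it nontrivially, then \eqref{eq:ProjV changing V} only gives $\ProjVst[1][1](f\circ g)=(\ProjVst[1][1]f)\circ g$. From $f\circ g=-f$ you then get $(\ProjVst[1][1]f)\circ g=-\ProjVst[1][1]f$, not $\ProjVst[1][1]f=0$.

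The missing observation is that any orthogonal map fixing $\planeV$ pointwise, including reflections of $\planeVperp$, is invisible to $\ProjV$. For such $g$ one has $\ProjV(f\circ g)=(\ProjV f)\circ g=\ProjV f$, because $\ProjV f$ depends only on $x_V$ and $\abs{x_\perp}$. Apply this with $g=S$ and $\planeV=\Vst[1][1]$, and set $Y\coloneqq\Ykmnab[4k][2m][2n][0][0]$. From \eqref{eq:Koornwinder functions} you get $Y\circ S=(-1)^m Y$, since $\Re((\omega_4+i\omega_2)^{2m})=(-1)^m\Re((\omega_2+i\omega_4)^{2m})$ and the other factors are unchanged. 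Hence $\ProjVst[1][1]Y=(-1)^m\ProjVst[1][1]Y$, and so $\Projmean Y=\Projeven\ProjVst[1][1]Y=0$ for odd $m$. The swap of $x_1$ and $x_3$ handles odd $n$ in the same way.

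This is precisely the paper's proof, where the two swaps appear as $\varphi\mapsto-\varphi$ and $\varphi\mapsto\pi-\varphi$ after conjugating by $\orthogonalEst[1][1]$. With it, your first step (the $A$-parity block decomposition) is unnecessary. Note also that $f\mapsto f\circ A$ is an involution only on $\Leven$, since $A^2=\operatorname{diag}(1,-1,1,-1)$.
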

\begin{proof}[Proof of Proposition \ref{prop:vanishing}]
	First, we recall that
	\begin{equation}
	\Projmean \Ykmnab[4k][2m][2n][0][0] = \Projeven \ProjVst[1][1] \Ykmnab[4k][2m][2n][0][0] 
	\end{equation}
	holds due to \eqref{eq:commute Projeven Projmean}. 
Therefore, it suffices to prove that $\ProjVst[1][1] \Ykmnab[4k][2m][2n][0][0]$ vanishes if at least one of $m$ and $n$ is odd.

For simplicity, we write
\begin{equation}
	\Ykmnabtilde(\omega)
	\coloneqq \Ykmnab[4k][2m][2n][0][0](\orthogonalEst[1][1] \omega) ,
\end{equation}
so that 
\begin{equation}
\ProjVst[1][1] \Ykmnab[4k][2m][2n][0][0] ( \orthogonalEst[1][1] \omega ) = \ProjV[\planeV_0] \Ykmnabtilde ( \omega ) .
\end{equation}
Then the formula \eqref{eq:ProjV polar} gives us
\begin{equation}
\ProjV[\planeV_0] \Ykmnabtilde 
= \begin{dcases}
\frac{1}{2\pi} \int_{\varphi \in \intervalco{0}{2\pi}} \Ykmnabtilde(\phantom{\pi -{}}\varphi, \psi, \theta) \, d\varphi , \\
\frac{1}{2\pi} \int_{\varphi \in \intervalco{0}{2\pi}} \Ykmnabtilde(\phantom{\pi} - \varphi, \psi, \theta) \, d\varphi , \\
\frac{1}{2\pi} \int_{\varphi \in \intervalco{0}{2\pi}} \Ykmnabtilde(\pi - \varphi, \psi, \theta) \, d\varphi 
\end{dcases}
\end{equation}
by changing the variable of integration.
On the other hand, we have
\begin{align}
\orthogonalEst[1][1]\omega(\phantom{\pi} - \varphi,\psi,\theta) &= 
\begin{pmatrix}
	1 & 0 & 0 & 0 \\
	0 & 0 & 0  & 1 \\
	0 & 0 & 1 & 0 \\
	0 & 1 & 0 &  0
\end{pmatrix} \orthogonalEst[1][1]\omega(\varphi,\psi,\theta) , \\
\orthogonalEst[1][1]\omega(\pi - \varphi,\psi,\theta) &= 
\begin{pmatrix}
	0 & 0 & 1 & 0 \\
	0 & 1 & 0 & 0 \\
	1 & 0 &0 & 0 \\
	0 & 0 & 0 & 1  
\end{pmatrix} \orthogonalEst[1][1]\omega(\varphi,\psi,\theta) 
\end{align}
due to 
\begin{equation}
	\orthogonalEst[1][1]\omega(\varphi,\psi,\theta)
	=\frac{1}{\sqrt{2}}
	\begin{pmatrix}
		\cos\psi \cos\theta + \cos\varphi \sin\theta\\
		\sin\psi \cos\theta + \sin\varphi \sin\theta\\
		\cos\psi \cos\theta - \cos\varphi \sin\theta\\
		\sin\psi \cos\theta - \sin\varphi \sin\theta
	\end{pmatrix} .
\end{equation}
In other words, replacing $\varphi$ by $- \varphi$ in the polar coordinates interchanges the second and fourth components in the Cartesian coordinates, 
and replacing $\varphi$ by $\pi - \varphi$ interchanges the first and third.
Substituting these into the definition of $\Ykmnab[4k][2m][2n][0][0]$, we obtain
\begin{equation}
\Ykmnabtilde(\varphi, \psi, \theta) = (-1)^m \Ykmnabtilde( - \varphi, \psi, \theta) = (-1)^n \Ykmnabtilde(\pi - \varphi, \psi, \theta) .
\end{equation}
Hence, integrating this with respect to $\varphi \in \intervalco{0}{2\pi}$ yields
\begin{equation}
\ProjV[\planeV_0] \Ykmnabtilde = (-1)^m \ProjV[\planeV_0] \Ykmnabtilde = (-1)^n \ProjV[\planeV_0] \Ykmnabtilde .
\end{equation}
Therefore, $\ProjV[\planeV_0] \Ykmnabtilde$ vanishes if at least one of $m$ and $n$ is odd.
\end{proof}
Now we prove Propositions \ref{prop:norm k=4} and \ref{prop:norm k=8}.
\begin{proof}[Proof of Proposition \ref{prop:norm k=4}]
	Thanks to Proposition \ref{prop:vanishing}, it suffices to find the operator norm on the subspace spanned by the ordered orthonormal basis
	\begin{equation}
	 (\Ykmnab[4][0][0][0][0], \Ykmnab[4][4][0][0][0], \Ykmnab[4][0][4][0][0] )
	\end{equation}
because $\Projmean$ is self-adjoint on $\Hkeven[4]$. 
One can find that the representation matrix of $\Projmean$ on this subspace is
	\begin{equation}\label{eq:representation matrix k=4}
		\frac{1}{2^6}
		\begin{pmatrix}
			22 & -9\sqrt{2} & -9 \sqrt{2} \\
    -9\sqrt{2} &         35 &          3  \\
	-9\sqrt{2} &          3 &         35
		\end{pmatrix} , 
	\end{equation}
and its characteristic polynomial is
	\begin{equation}\label{eq:characteristic polynomial k=4}
	\frac{1}{2^5} \mleft( 2 \lambda - 1 \mright)
		\mleft( 16 \lambda^2 - 15 \lambda + 2 \mright).
	\end{equation}
As a consequence, the maximal eigenvalue of \eqref{eq:representation matrix k=4} is $(15+\sqrt{97})/32$, and its associated eigenspace is spanned by
\begin{equation}
\begin{pmatrix}
9 \sqrt{2} \\
- (4 + \sqrt{97}) \\
- (4 + \sqrt{97})  
\end{pmatrix} .
\end{equation}
This proves Proposition \ref{prop:norm k=4}.
\end{proof}

\begin{proof}[Proof of Proposition \ref{prop:norm k=8}]
It suffices to find the operator norm on the subspace spanned by the ordered orthonormal basis
	\begin{equation}
	( \Ykmnab[8][0][0][0][0], \Ykmnab[8][4][0][0][0], \Ykmnab[8][0][4][0][0],
	 \Ykmnab[8][8][0][0][0], \Ykmnab[8][4][4][0][0], \Ykmnab[8][0][8][0][0] ) .
	\end{equation}
One can find that the representation matrix of $\Projmean$ on this subspace is
	\begin{equation} \label{eq:representation matrix k=8}
		\setlength{\arraycolsep}{3.5pt}
\frac{1}{2^{14}}
\left(\begin{array}{r@{}l r@{}l r@{}l r@{}l r@{}l r@{}l}
	3574&&-1660&\sqrt{2}&-1660&\sqrt{2}&1225&\sqrt{2}&18&\sqrt{70}&1225&\sqrt{2}\\
	-1660&\sqrt{2}&5040&&-1360&&-2772&&-168&\sqrt{35}&-980&\\
	-1660&\sqrt{2}&-1360&&5040&&-980&&-168&\sqrt{35}&-2772&\\
	1225&\sqrt{2}&-2772&&-980&&6435&&198&\sqrt{35}&35&\\
	18&\sqrt{70}&-168&\sqrt{35}&-168&\sqrt{35}&198&\sqrt{35}&4900&&198&\sqrt{35}\\
	1225&\sqrt{2}&-980&&-2772&&35&&198&\sqrt{35}&6435&
\end{array}\right) , 
	\end{equation}
and its characteristic polynomial is
	\begin{equation}\label{eq:characteristic polynomial k=8}
	\frac{1}{2^{16}} \lambda ( 2 \lambda - 1 ) ( 32 \lambda - 9 )
		(1024 \lambda^3 - 1164 \lambda^2 + 349 \lambda - 29) .
	\end{equation}
	Here, the cubic factor can be rewritten as
	\begin{align}\label{eq:characteristic polynomial k=8 rewritten}
		&\quad 1024 \lambda^3 - 1164 \lambda^2 + 349 \lambda - 29 \\
		&=1024 ( \lambda - 3/4 )^3
		+1140 ( \lambda - 3/4 )^2
		+331 ( \lambda - 3/4 ) 
		+ 10 , 
	\end{align}
	so that it does not have any zeros on $\intervalco{3/4}{\infty}$.
	As a consequence, the maximal eigenvalue of \eqref{eq:representation matrix k=8} is less than $3/4$. 
	This proves Proposition \ref{prop:norm k=8}.
\end{proof}

\section*{Use of Artificial Intelligence}
During the preparation of this work the author used GPT-6 Astra (OpenAI) to assist with the calculations in the proofs of Propositions \ref{prop:norm k=4} and \ref{prop:norm k=8}.
After using this tool, the author reviewed and edited the content as needed and takes full responsibility for the content of the manuscript.

\end{document}